\mathchardef\mhyphen="2D
\newcommand{\conv}{\operatorname{conv}}
\newcommand{\cone}{\operatorname{cone}}
\newcommand{\rank}{\operatorname{rank}}
\newcommand{\pr}{\operatorname{Pr}}
\newcommand{\R}{\mathbb{R}}
\newcommand{\Z}{\mathbb{Z}}
\newcommand{\cA}{\mathcal{A}}
\newcommand{\Po}{\operatorname{P}}
\newcommand{\IP}{\operatorname{IP}}
\newcommand{\WMIP}{{\rm W\!\mhyphen MIP}}
\newcommand{\cG}{\mathcal{G}}
\newcommand{\smallBox}{\ensuremath{B}}
\DeclareMathAlphabet{\mymathbb}{U}{BOONDOX-ds}{m}{n}
\spnewtheorem{const}{Construction}{\bfseries}{\itshape}
\newcommand{\EQqed}{\tag*{$\qed$}}
\title{The Integrality Number of an Integer Program\footnote{This work expands on a recent IPCO paper~\cite{PSW2020} by providing more details on related work, emphasizing an additional application to non-degenerate integer programs, and correcting a mistake inadvertently introduced in the IPCO review process.}\setcounter{footnote}{3}\footnote{This is a preprint of an article published in Mathematical Programming. The final authenticated version is available online at: {\href{https://doi.org/10.1007/s10107-021-01651-0}{doi.org/10.1007/s10107-021-01651-0}}.}}
\date{}
\author{Joseph Paat$^{\ddag}$ \and Miriam Schl\"oter$^{\mathsection}$ \and Robert Weismantel$^{\mathsection}$}
\institute{
{$\ddag$}\enspace Sauder School of Business, University of British Columbia, BC Canada \\
\email{joseph.paat@sauder.ubc.ca}\\
{$\mathsection$}\enspace Department of Mathematics, ETH Z\"urich, Switzerland\\ \email{\{miriam.schloeter, robert.weismantel\}@ifor.math.ethz.ch}
%
}
\begin{document}
\maketitle
\begin{abstract}
We introduce the \emph{integrality number} of an integer 
program (IP). 
Roughly speaking, the integrality number is the smallest number of integer constraints needed to solve an IP via a mixed integer (MIP) relaxation.
One notable property of this number is its invariance under unimodular transformations of the constraint matrix.
Considering the largest minor $\Delta$ of the constraint matrix, our analysis allows us to make statements of the following form:  there exists a number $\tau(\Delta)$ such that an IP with $n$ many variables and $n + \sqrt{n /\tau(\Delta)}$ many inequality constraints can be solved via a MIP relaxation with fewer than $n$ integer constraints.
From our results it follows that IPs defined by only $n$ constraints can be solved via a MIP relaxation with $O(\sqrt{\Delta})$ many integer constraints.
\end{abstract}

\section{Introduction.}\label{secIntro}
\thispagestyle{plain}
We denote the polyhedron defined by the constraint matrix $A \in \Z^{m\times n} $ and the right hand side $b \in \Z^m$ by
\begin{equation*}
\Po(A,b) := \{x \in \R^n : Ax \le b\}.
\end{equation*}
We always assume $\rank(A) = n$.
%
%
Lenstra's algorithm~\cite{Lenstra1983} can be used to test the feasibility of  $\Po(A,b) \cap \Z^n$ in polynomial time when the dimension $n$ is fixed; see also~\cite{DP2011,RK1987}.
In fact, his algorithm easily extends to compute a vertex of the {\it integer hull of $\Po(A,b)$}, which we denote by
\[
\IP(A,b) :=  \conv\{x \in \Po(A,b) : ~ x \in \Z^n\}.
\]
See the discussion after Lemma~\ref{PPPoints} for this extension.
More generally, Lenstra's algorithm can be used to find a vertex of a mixed integer hull.
The \emph{mixed integer hull of $\Po(A,b)$ corresponding to a matrix $W \in \Z^{k\times n}$} is
\[
\WMIP(A,b) := \conv\{x \in \Po(A,b) : ~ Wx \in \Z^k\}.
\]
Every mixed integer hull $\WMIP(A,b)$ is a relaxation of the integer hull, and the running time of Lenstra's algorithm to find a vertex of $\WMIP(A,b)$ (if it exists) is polynomial when $k$ is fixed. 
Therefore, if $W$ is chosen such that $\WMIP(A,b) = \IP(A,b)$, then the feasibility of $\Po(A,b) \cap \Z^n$ can be tested in polynomial time when $k$ is fixed rather than under the stronger assumption that $n$ is fixed.
To this end, we define the \emph{integrality number of $\Po(A,b)$}:
\[
i(A,b) :=\min\big\{
 k \in \Z_{\geq 0}: 
\exists~W \in \Z^{k \times n}  \text{ such that } \WMIP(A,b) = \IP(A,b)
\big\}. 
\]
An equivalent definition of $i(A,b)$ is that all vertices of $\WMIP(A,b)$ are integral.
The value $i(A,b)$ can be interpreted as the fewest number of integer constraints that one needs to add to $\Po(A,b)$ in order to test the feasibility of $\Po(A,b) \cap \Z^n$.
The goal of this paper is to constructively bound $i(A,b)$.

\subsection{Related work.}

There is a vast collection of work on how to solve integer programs using mixed integer reformulations. 
Recent work by Hildebrand et al.~\cite{HWZ2017} and Cevallos et al.~\cite{CWZ2018} can be used to bound the integrality number for certain combinatorial sets such as the matching polytope, the cut polytope, and the traveling salesman polytope.
In their work the authors consider mixed integer extended formulations, which recast $\IP(A,b)$ as a mixed integer set in dimension $n' \ge n$ with possibly different polyhedral and integrality constraints.
%
Their results imply that if these combinatorial sets are modeled as $\IP(A,b)$ such that the number of polyhedral constraints is polynomial in the input size of the combinatorial problem, then $i(A,b) \in \Omega(n/\log_2(n))$. 
We refer the interested reader to~\cite{CWZ2018} and the references therein for more on $i(A,b)$ for a large collection of combinatorial problems.
We emphasize that in this paper we only relax the integer constraints of $\IP(A,b)$ and not the underlying polyhedral constraints.

There are some simple situations in which we can bound $i(A,b)$ for general choices of $A$ and $b$. 
For instance, we can always upper bound $i(A,b)$ by $n$ by choosing $W = \mathbb{I}^n$, where $\mathbb{I}^n$ is the identity matrix.
Also, $i(A,b) = 0$ if and only if $\Po(A,b) = \IP(A,b)$, i.e., if and only if $\Po(A,b)$ is a perfect formulation; see Chapter 4 in~\cite{CCZ2014} for an introduction to perfect formulations. 
One sufficient condition for $\Po(A,b)$ to be a perfect formulation is based on the determinants of $A$. 
The largest full rank minor of a matrix $C \in \R^{d\times \ell}$ is denoted by
\[
\Delta(C) := \max\{|\det(B)| : B \text{ is a } \text{rank}(C) \times \text{rank}(C) \text{ submatrix of } C\}.
\]
The topic of integer programming with bounded determinants $\Delta(A)$ has been studied extensively in the past years for general polyhedra~\cite{AEGOVW2016,AWZ2017,GC2016,VC2009} as well as for more structured combinatorial problems~\cite{CFHJW2020,CFHW2020,NSZ2019}.
The matrix $A$ is unimodular if and only if $\Delta(A) =1$, in which case $\Po(A,b)$ is a perfect formulation.
For fixed values of $\Delta(A) \ge 2$, one may ask if $i(A,b)$ can be bounded independently of $n$.
However, Cevallos et al.~\cite[Theorem 5.4]{CWZ2018} construct examples of polyhedra $\Po(A,b)$ with $\Delta(A) = 2$ and $i(A,b) \in \Omega(\sqrt{n}/\log{n})$.

Another tool for bounding $i(A,b)$ is the technique of aggregating columns. 
Let $C \in \Z^{\ell \times n}$ and take $A \in \Z^{m\times n}$ to be a submatrix of 
\begin{equation}\label{eqStdForm}
\left( \begin{array}{r} \mathbb{I}^n \\ - \mathbb{I}^n \\ C \\ -C\end{array}\right).
\end{equation}
%
%
Denote the distinct columns of $C$ by $c_1, \ldots, c_t$, where $ t \le n$, and for each $i = 1, \ldots, t$, let $J_i \subseteq \{1, \ldots, n\}$ index the set of columns of $C$ equal to $c_i$. 
A folklore result in integer programming is that
\[
\IP(A,b) = \conv\bigg\{ x \in \Po(A,b) : \sum_{s \in J_i} x_s \in \Z ~~\forall ~ i \in \{1, \ldots, t\} \bigg\} \quad \forall ~ b \in \Z^m.
\]
See Theorem~\ref{thmStdForm} for a proof of this result.
Hence, $i(A,b)$ is always upper bounded by the number of distinct columns of $C$.
For $r, \Delta \in \Z_{\ge 1}$, define
\[
c(r,\Delta) := \max \bigg\{ d\in \Z_{\ge 1} :
\begin{array}{l}
\exists ~ B \in \Z^{r\times d} \text{ with } d \text{ distinct columns},\\[.05 cm]
\rank(B) = r, \text{ and } \Delta(B) \le  \Delta
\end{array}
\bigg\}.
\]
Set $c(0,\Delta) := 1$.
Heller~\cite{H1957} and Glanzer et al.~\cite{GWZ2018} showed that
\begin{equation}\label{eqDistinctCols}
c(r,\Delta) \le 
\begin{cases}
r^2+r + 1& \text{if } \Delta = 1\\
\Delta^{2+\log_2\log_2(\Delta)} \cdot r^2 +1 & \text{if } \Delta \ge 2.
\end{cases}
\end{equation}
Hence, if $A$ is a submatrix of~\eqref{eqStdForm}, then $i(A,b) \le c(\ell,\Delta(C))$. 

The work most related to this paper is by Bader et al.~\cite{BHWZ2017}, who introduce affine TU decompositions.
An affine TU decomposition of $A$ is an equation $A=A_0 + U W$ such that $[A_0^\intercal~W^\intercal]$ is totally unimodular and $U$ is an integral matrix.
It can be shown that $\WMIP(A,b) = \IP(A,b)$, so computing an affine TU decomposition bounds $i(A,b)$.
 The authors in~\cite{BHWZ2017} prove that it is NP-hard to compute an affine TU decomposition, and they describe decompositions for special families of integer programs~\cite[Section 3]{BHWZ2017}; see also Hupp~\cite{H2017}, who investigated computational benefits of affine TU decompositions.
However, Bader et al.\ do not provide a technique for creating affine TU decompositions for general matrices.
%
Furthermore, affine TU decompositions are not robust under unimodular mappings of $A$; see the text following Definition 2 in~\cite{BHWZ2017}.
On the other hand, the integrality number is preserved under unimodular maps; see Lemma~\ref{lemHNF}.
We highlight this invariance property because transforming $\Po(A,b)$ by a unimodular map preserves integer vertices of any mixed integer hull.
%


\subsection{Statement of Results.}

Our first main result is an upper bound for $i(A,b)$ for general $A$ and $b$ using $\Delta(A)$ and $c(r, \Delta(A))$. 
We are not aware of existing results in the literature that bound the integrality number in such generality.

\begin{theorem}\label{thmNearlySquare}
Let $A \in \Z^{m\times n}$.
Suppose
\[
A = \begin{pmatrix}A^1 \\ A^2 \end{pmatrix},
\]
where $r := \rank(A^2)$ and $A^1 \in \Z^{n\times n}$ with $\delta := |\det(A^1)| \ge 1 $.
Then 
\[
i(A,b) \le [4\delta^{1/2}+\log_2(\delta) ] \cdot \min\{c(r, \Delta(A^2)),  c(r, \Delta(A))\} \qquad \forall ~ b \in \Z^m.
\]
\end{theorem}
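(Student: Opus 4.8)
The plan is to reduce the general matrix $A$ to the column-aggregation setting by applying a unimodular transformation that makes the square block $A^1$ as simple as possible, and then to control the combinatorial explosion of "integer constraints'' that this transformation introduces using the quantity $c(r,\cdot)$.

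First I would use the invariance of the integrality number under unimodular column operations (Lemma~\ref{lemHNF}). Let $U \in \Z^{n\times n}$ be unimodular with $A^1 U = H$ in Hermite normal form, so $H$ is lower triangular with positive diagonal entries $h_1,\dots,h_n$ whose product is $\delta$. Replacing $A$ by $AU$, we may assume $A^1$ is lower triangular with $\prod h_i = \delta$. A point $x$ satisfies $A^1 x \in \Z^n$ if and only if the triangular system forces, coordinate by coordinate, certain rational multiples to be integral; the crucial structural fact is that $\{x : A^1 x \le b^1,\ A^1 x \in \Z^n\}$ is obtained from $\Po(A,b)$ by intersecting with $A^1 x \in \Z^n$, and since $A^1$ is square of determinant $\delta$ this lattice condition can be encoded by $x$ lying in $\frac{1}{\delta}\Z^n$-type cosets. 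The first genuine step is therefore: the number of "essentially distinct'' integrality requirements coming from the block $A^2$, once $A^1 x \in \Z^n$ is imposed, is governed by how $A^2$ interacts with the finite group $\Z^n / A^1 \Z^n$, whose order is $\delta$.

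Next I would invoke the folklore column-aggregation result (Theorem~\ref{thmStdForm}) together with the Heller/Glanzer et al.\ bound~\eqref{eqDistinctCols}: because $A^2$ has rank $r$, only $c(r,\Delta(A^2))$ many distinct columns of $A^2$ can appear, and intersecting with either $A^2$ or the full $A$ gives the $\min$. This contributes the factor $\min\{c(r,\Delta(A^2)), c(r,\Delta(A))\}$. The remaining — and I expect main — obstacle is the factor $4\delta^{1/2} + \log_2(\delta)$: this must come from a careful counting argument showing that the lattice $\frac{1}{\delta}\Z^n \cap \Po$ relative to $\Z^n$ can be captured by at most $4\delta^{1/2} + \log_2(\delta)$ integer linear functionals. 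I would attack this by splitting $\delta = \prod h_i$ into the product of its diagonal Hermite entries and treating "large'' entries $h_i > \delta^{1/2}$ (of which there are fewer than $2\log_2(\delta)$, or fewer than one since their product is at most $\delta$) separately from the product of the "small'' entries; a greedy/dyadic grouping of the small $h_i$ into blocks whose partial products stay below $\delta^{1/2}$ yields roughly $\delta^{1/2}$ groups, each contributing one integer constraint, with the logarithmic term absorbing the large entries.

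Finally I would assemble the pieces: each of the (at most $4\delta^{1/2}+\log_2(\delta)$) integrality constraints arising from $A^1$, when combined with the (at most $\min\{c(r,\Delta(A^2)),c(r,\Delta(A))\}$) aggregated constraints from $A^2$, produces a product bound, and verify via Theorem~\ref{thmStdForm}-style reasoning that the resulting matrix $W$ satisfies $\WMIP(AU,b) = \IP(AU,b)$; pulling back through $U^{-1}$ and applying Lemma~\ref{lemHNF} once more gives the claim for the original $A$. The step I am least certain will work cleanly as stated is the precise $4\delta^{1/2}+\log_2(\delta)$ count — getting the constant $4$ and the additive $\log_2(\delta)$ rather than a multiplicative one will require the dyadic grouping to be done with some care, and this is where I would expect the bulk of the technical work to lie.
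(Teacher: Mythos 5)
Your sketch of the first half matches the paper's strategy for \emph{Part 1} of the bound: use Lemma~\ref{lemHNF} to pass to Hermite Normal Form, cover the columns of the nontrivial triangular block $A^1_I$ (which lie in the box $\{0,\dots,\alpha_1-1\}\times\cdots\times\{0,\dots,\alpha_\ell-1\}$ of volume $\delta$) by a sub-box plus translates with $|B^I|+|T^I|\le 4\delta^{1/2}+\log_2(\delta)$ (Construction~\ref{constSplit}, Lemma~\ref{Lem:BoxAndTransLatesConstruction2}), aggregate the at most $c(r,\Delta(A^2))$ distinct columns coming from a rank-$r$ row submatrix of $A^2$, combine the two coverings by the product formula (Lemma~\ref{lemProductFormula}), and feed the resulting totally unimodular matrix into Lemma~\ref{lemOptCond}. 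One caveat: your counting mechanism for the factor $4\delta^{1/2}+\log_2(\delta)$ is not the right one. Grouping the diagonal entries into blocks whose partial products stay below $\delta^{1/2}$ produces only a few groups, and a single integer constraint per group cannot capture the corresponding integrality requirements; in the paper each element of $B^I\cup T^I$ (roughly $\delta^{1/2}$ box points plus $\delta^{1/2}$ translates, plus the $\ell\le\log_2(\delta)$ diagonal columns) becomes one row of a $0/1$ totally unimodular matrix $W$, i.e.\ one integer constraint, and the general lower bound $|B|+|T|\ge|C|^{1/2}$ shows you cannot expect substantially fewer.

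The genuine gap is the second term of the minimum, $c(r,\Delta(A))$. You dispose of it with ``intersecting with either $A^2$ or the full $A$ gives the $\min$,'' but this is exactly the technical hurdle the statement hides: when $r<n$ the quantity $\Delta(A^2)$ can strictly exceed $\Delta(A)$ (the paper exhibits a $5\times 3$ example with $\Delta(A)=5$ and $\Delta(A^2)=6$), so the number of distinct columns of $A^2$ cannot be bounded by $c(r,\Delta(A))$ by a direct appeal to the Heller/Glanzer-type bound. The paper's proof of this half needs Lemma~\ref{lemGroup}: writing $A^2=[R~\,Q]A^1$, one first shows $\Delta([R~\,Q])\le\Delta/\delta$, then uses the group $\{g\in[0,1)^n:\ g^\intercal A^1\in\Z^n\}$ to build a lower-triangular integer matrix $E$ with $|\det(E)|\le\delta$ such that $E[R~\,Q]$ is integral and $\Delta(E[R~\,Q])\le\Delta$, which bounds the number of distinct columns of $[R~\,Q]$ by $c(r,\Delta)$; finally the decomposition $A^2=[R~\,0]+QA^1_I$ is used to check that the rows of $A^2$ still lie in the row span of the totally unimodular matrix obtained by covering $[R~\,0]$ and $A^1_I$, so Lemma~\ref{lemOptCond} applies. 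Your opening remark about how $A^2$ interacts with the group $\Z^n/A^1\Z^n$ of order $\delta$ points in this direction, but it is never developed, and without an argument of this kind your proposal establishes only $i(A,b)\le[4\delta^{1/2}+\log_2(\delta)]\cdot c(r,\Delta(A^2))$, not the stated minimum.
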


The matrix $W$ underlying the proof of Theorem~\ref{thmNearlySquare} can be constructed in polynomial time without the need to compute $\Delta(A)$, which is NP-hard even to approximate~\cite{DEFM2015,P2004}.
%
%
%
Theorem~\ref{thmNearlySquare} and inequality~\eqref{eqDistinctCols} show that $i(A,b) \le \tau(\Delta(A)) \cdot r^2$ for some value $\tau(\Delta(A)) \ge 1$. 
Therefore, if $r < \sqrt{n / \tau(\Delta(A))}$, then $i(A,b) < n$. 
We state this consequence as a corollary.
\begin{corollary}
Let $A \in \Z^{m\times n}$.
There exists a number $\tau(\Delta(A)) \ge 1$ that satisfies the following: 
if $m \le n + \sqrt{n/\tau(\Delta(A))}$, then for all $b \in \Z^m$, the feasibility of $\Po(A,b) \cap \Z^n$ can be tested using a $\WMIP$ relaxation with fewer than $n$ integrality constraints.
\end{corollary}

The integrality number $i(A,b)$ is defined as a minimum over all mixed integer hulls $\WMIP(A,b)$ that equal $\IP(A,b)$. 
However, in order to constructively bound $i(A,b)$, we consider a certain family of mixed integer hulls derived by representing groups of variables with a single integer constraint.
%
%
The next result illustrates how grouping variables can bound $i(A,b)$.
For $K^1, K^2 \subseteq \R^n$, define $K^1 + K^2 := \{x+y : x \in K^1\text{, }y \in K^2\}$.
%

\begin{theorem}\label{thmStdForm}
Let $C \in \Z^{\ell \times n}$ and $A \in \Z^{m\times n}$ be any submatrix of the matrix~\eqref{eqStdForm}.
If $B,T \subseteq \Z^\ell$ are such that the set of distinct columns of $C$ is contained in
\(
B + (T \cup \{0\}),
\)
then $i(A,b) \le |B|+|T|$ for all $b \in \Z^m$.
\end{theorem}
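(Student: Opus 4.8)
The plan is to reduce the statement to the folklore aggregation result quoted in the excerpt by constructing an explicit integrality matrix $W$ with $|B|+|T|$ rows. Recall that the folklore result says
\[
\IP(A,b) = \conv\Big\{ x \in \Po(A,b) : \textstyle\sum_{s \in J_i} x_s \in \Z \ \forall\, i \in \{1,\dotsc,t\} \Big\},
\]
where $c_1,\dotsc,c_t$ are the distinct columns of $C$ and $J_i$ indexes the columns of $C$ equal to $c_i$. Equivalently, writing $D \in \Z^{t \times n}$ for the $0/1$ matrix whose $i$-th row is the indicator of $J_i$, we have $\IP(A,b) = D\text{-MIP}(A,b)$. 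So it suffices to produce a matrix $W$ with $|B|+|T|$ rows such that $Dx \in \Z^t$ holds for an $x \in \Po(A,b)$ whenever $Wx \in \Z^{|B|+|T|}$; this gives $\WMIP(A,b) \subseteq D\text{-MIP}(A,b) = \IP(A,b)$, and the reverse inclusion is automatic since any integral point satisfies any integrality constraint.

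First I would set up coordinates for the hypothesis $c_i \in B + (T \cup \{0\})$: for each $i$ pick $\beta(i) \in B$ and $\theta(i) \in T \cup \{0\}$ with $c_i = \beta(i) + \theta(i)$. The key observation is that for a point $x$ with $Cx \in \Z^\ell$ (which we get for free on integral points, but must engineer in general), we can read off $\sum_{s\in J_i} x_s$ modulo $1$ from aggregated sums indexed by elements of $B$ and $T$. Concretely, for $\beta \in B$ let $S_\beta = \bigcup_{i : \beta(i) = \beta} J_i$ and for $\theta \in T$ let $S_\theta = \bigcup_{i : \theta(i) = \theta} J_i$; then the rows of $W$ are the indicator vectors of the sets $S_\beta$ ($\beta \in B$) together with the indicator vectors of the sets $S_\theta$ ($\theta \in T$), for a total of $|B| + |T|$ rows. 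The arithmetic identity to verify is that $\sum_{s \in J_i} x_s$ is an integer combination of the quantities $\sum_{s \in S_\beta} x_s$ and $\sum_{s \in S_\theta} x_s$, using that $C = \big(\sum_\beta \beta \cdot \mathbf{1}_{S_\beta}^\intercal\big) + \big(\sum_\theta \theta \cdot \mathbf{1}_{S_\theta}^\intercal\big)$ decomposes the column $c_i$ additively; the integrality of $Wx$ then forces the relevant partial sums, hence $\sum_{s \in J_i} x_s$, into $\Z$.

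The step I expect to be the main obstacle is handling the overlap structure cleanly: a given $J_i$ contributes to exactly one $S_\beta$ and at most one $S_\theta$, but different $J_i$'s sharing the same $\beta$ (or the same $\theta$) get merged, so one must argue that knowing the merged sums $\sum_{s \in S_\beta} x_s$ suffices to recover each individual $\sum_{s \in J_i} x_s \bmod 1$. This works because the decomposition is linear over the index set and $\{J_i\}$ partitions $\{1,\dotsc,n\}$ (columns outside $C$ are irrelevant): summing the $W$-constraints with the right integer weights telescopes to the $D$-constraint. I would also remark that, just as in Theorem~\ref{thmNearlySquare}, no determinant computation is needed — $W$ is built purely combinatorially from a given cover $B + (T\cup\{0\})$ of the distinct columns of $C$ — and that the case $T = \emptyset$, $B = \{c_1,\dotsc,c_t\}$ recovers the folklore bound $i(A,b) \le t$.
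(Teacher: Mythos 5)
There is a genuine gap, and it sits exactly at the step you flagged as "the main obstacle." The matrix $W$ you build (indicators of the sets $S_\beta$ and $S_\theta$) is precisely the matrix of~\eqref{eqDefOfW}, but your verification route --- reduce to the folklore aggregation result by showing that $Wx\in\Z^{|B|+|T|}$ forces $\sum_{s\in J_i}x_s\in\Z$ for every $i$ --- cannot work. Your "telescoping" identity asserts that each indicator vector $\mathbf{1}_{J_i}$ is an integer combination of the rows of $W$, hence lies in the row space of $W$. The sets $J_1,\dotsc,J_t$ are disjoint and nonempty, so $\mathbf{1}_{J_1},\dotsc,\mathbf{1}_{J_t}$ are linearly independent, while $\rank(W)\le |B|+|T|$. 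Thus whenever $|B|+|T|<t$ --- which is the only situation in which Theorem~\ref{thmStdForm} improves on classical aggregation, and exactly what the grid-type covers of Construction~\ref{constSplit} produce --- not all $\mathbf{1}_{J_i}$ can lie in the row space of $W$, and the claimed implication is false, even restricted to points of $\Po(A,b)$. A small illustration of the pointwise failure: $\ell=1$, $C=(2~4~3~5)$, $B=\{0,1\}$, $T=\{2,4\}$, so the rows of $W$ are $(1,1,0,0)$, $(0,0,1,1)$, $(1,0,1,0)$, $(0,1,0,1)$; the point $x=(\tfrac12,-\tfrac12,-\tfrac12,\tfrac12)$ has $Wx=0$ and $Cx=0$, yet no single aggregated sum $\sum_{s\in J_i}x_s$ is integral.

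The fix is not a refinement of the merging argument but a different kind of argument, which is what the paper does: one never recovers the aggregated constraints pointwise. Instead, with the same $W$, Lemma~\ref{Lem:Tu} shows $W$ is totally unimodular via Ghouila--Houri (each column of $W$ has at most two nonzero entries, both equal to $1$, one among the $B$-rows and one among the $T$-rows), and Lemma~\ref{lemOptCond} then argues at the level of vertices of $\WMIP(A,b)$: since $C=(B~T)W$, every row of $C$ lies in the row space of $W$, so at a vertex $z^*$ the tight rows $A_J$ that are linearly independent of the rows of $W$ must come from the $\pm\mathbb{I}^n$ blocks of~\eqref{eqStdForm}; the basis matrix with rows $A_J$ and $W$ is therefore unimodular, and Cramer's rule gives $z^*\in\Z^n$. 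Integrality of all vertices of $\WMIP(A,b)$ is what yields $\WMIP(A,b)=\IP(A,b)$, without ever asserting that $Wx\in\Z^k$ implies $x$ satisfies the aggregated integrality constraints. So your construction of $W$ is the right one, but the obstacle you identified is a genuine obstruction to your reduction, not a technicality; the TU-plus-vertex argument (or something equivalent) is needed.
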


Classical variable aggregation can be recovered from Theorem~\ref{thmStdForm} by setting $B$ to be the set of columns of $C$ and $T$ to be the empty set. 
The proof of this theorem is partially inspired by Bader et al.~\cite{BHWZ2017}, who prove the result when $\ell = 1$ using a combinatorial argument. 
The proof of Theorem~\ref{thmStdForm} leads to an interesting problem on covering columns of a matrix with bounded determinants.
We discuss this problem in Section~\ref{secGeometry}, and we believe it could be of independent interest in future research. 
In Section~\ref{secSquare}, we apply the covering results to prove both Theorems~\ref{thmNearlySquare} and~\ref{thmStdForm}.
We conclude in Section~\ref{secOtherIP} by applying these theorems to special families of integer programs. 
We examine non-degenerate integer programs and improve bounds for the integrality number implied by results in~\cite{AEGOVW2016}.
We also consider consequences of our results on the asymptotic properties of optimal solutions to integer programs. 
%

\medskip
\noindent For the remainder of the paper, we set $\Delta := \Delta(A)$ unless stated otherwise.

\medskip
%

\noindent\textbf{Notation and preliminaries.}
Denote the largest minor of a matrix $C \in \R^{m\times n}$ by
\[
\Delta^{\max}(C) := \max\{|\det(B)| : B \text{ a submatrix of } C\}.
\]
A matrix $W \in \Z^{k\times n}$ is \emph{totally unimodular} if $\Delta^{\max}(W) \le 1$.

Denote the $i$-th row of $C$ by $C_i$.
For $I \subseteq \{1,\ldots,m\}$, let $C_I$ denote the $|I| \times n$ matrix consisting of the rows $\{C_i : i \in I\}$.
Denote the $d \times k $ all zero matrix by $\mymathbb{0}^{d \times k}$.
We use $0$ to denote the zero vector in the appropriate dimension.
We view $B \in \R^{d\times p}$ as a matrix and a set of column vectors in $\R^d$.
So, $v \in B$ implies that $v$ is a column of $B$, and $|B|$ denotes the number of distinct columns of $B$.

The following result is used extensively throughout the paper. 
A proof can be found in~\cite[\S VII]{barv2002}.
\begin{lemma}\label{PPPoints}
Let $B \in \Z^{d\times d}$ be an invertible matrix. 
Then
\[
|\{B \lambda \in \Z^d : \lambda \in [0,1)^d\}| = |\det(B)|.
\] 
For every $v \in \Z^d$, there are unique vectors $\lambda \in [0,1)^d$ and $\tau \in \Z^d$ such that $v = B (\lambda+\tau)$.
\end{lemma}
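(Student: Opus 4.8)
The plan is to exploit the fact that $B$ gives a bijection between $\R^d$ and itself, and that $\Z^d$ decomposes compatibly with the lattice $B\Z^d$. First I would establish the "for every $v \in \Z^d$" part, since the counting statement follows from it. Given $v \in \Z^d$, set $\mu := B^{-1}v \in \Q^d$ (rational since $B$ is integral and invertible, so $B^{-1}$ has rational entries). Write each coordinate $\mu_j = \tau_j + \lambda_j$ with $\tau_j := \lfloor \mu_j \rfloor \in \Z$ and $\lambda_j := \mu_j - \lfloor \mu_j \rfloor \in [0,1)$; this is the standard decomposition of a real number into integer and fractional parts, and it is unique subject to $\lambda \in [0,1)^d$, $\tau \in \Z^d$. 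Then $v = B\mu = B(\lambda + \tau)$, which is the claimed representation. For uniqueness at the level of vectors: if $B(\lambda + \tau) = B(\lambda' + \tau')$ with $\lambda,\lambda' \in [0,1)^d$ and $\tau,\tau' \in \Z^d$, then applying $B^{-1}$ gives $\lambda + \tau = \lambda' + \tau'$ in $\R^d$, and uniqueness of the integer/fractional decomposition coordinatewise forces $\lambda = \lambda'$ and $\tau = \tau'$.

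Next I would deduce the counting identity. Consider the map $\phi$ sending $v \in \Z^d$ to its fractional part $\lambda(v) \in [0,1)^d$ as constructed above. By the previous paragraph, $B\lambda(v) = v - B\tau(v) \in \Z^d$ since $v \in \Z^d$ and $B\tau(v) \in B\Z^d \subseteq \Z^d$; hence $\lambda(v)$ lies in the set $S := \{B\lambda \in \Z^d : \lambda \in [0,1)^d\}$ — more precisely $\lambda(v)$ is one of the finitely many $\lambda \in [0,1)^d$ with $B\lambda \in \Z^d$. Conversely every such $\lambda$ arises: $B\lambda \in \Z^d$ is an integer vector whose fractional part (in the $B$-decomposition) is exactly $\lambda$, because $B\lambda = B(\lambda + 0)$ and $0 \in \Z^d$, so the uniqueness clause identifies $\lambda$ as its fractional part. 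Thus $\phi$ is a surjection from $\Z^d$ onto this index set, and its fibers are the cosets $\lambda(v) + \Z^d$ intersected with $\Z^d$, i.e. $\phi$ induces a bijection between $\Z^d / B\Z^d$ and the set of admissible fractional parts; equivalently $|S| = [\Z^d : B\Z^d]$.

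Finally I would identify the index $[\Z^d : B\Z^d]$ with $|\det B|$. This is the classical statement that the number of integer points in a fundamental parallelepiped of a full-rank integral sublattice equals the absolute value of the determinant of any basis matrix; as the excerpt notes, a proof is in~\cite[\S VII]{barv2002}, so I would cite it rather than reprove it. Combining, $|\{B\lambda \in \Z^d : \lambda \in [0,1)^d\}| = [\Z^d : B\Z^d] = |\det B|$, completing the proof. The only mildly delicate point is the bookkeeping in the second paragraph — making sure the correspondence between admissible fractional parts $\lambda$ and cosets of $B\Z^d$ in $\Z^d$ is genuinely a bijection (injectivity uses the uniqueness clause; surjectivity uses that every integer vector has such a decomposition) — but this is routine once the first paragraph is in place, and the determinant count itself is quoted.
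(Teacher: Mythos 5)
Your proposal is correct, but note that the paper does not actually give a proof of Lemma~\ref{PPPoints} at all --- it simply cites \cite[\S VII]{barv2002} --- so your write-up is strictly more detailed than the paper's treatment. Your first paragraph (existence and uniqueness of $v=B(\lambda+\tau)$ via the coordinatewise integer/fractional decomposition of $B^{-1}v$, with uniqueness transported through the invertibility of $B$) is exactly right, and your second paragraph correctly reduces the counting claim to the classical identity $[\Z^d : B\Z^d]=|\det B|$, which you then quote; this is legitimate and not circular, since that identity has independent proofs (e.g.\ via the Smith normal form), though you should be aware it is essentially the heart of the lemma, so your argument is a careful reduction plus a citation rather than a fully self-contained proof --- acceptable here, given that the paper quotes the entire lemma from the same source. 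Two small repairs: the fiber of your map $\phi$ over an admissible fractional part $\lambda$ is the coset $B\lambda + B\Z^d$ of the sublattice $B\Z^d$ in $\Z^d$, not ``$\lambda(v)+\Z^d$ intersected with $\Z^d$'' as written (your subsequent ``i.e.''\ clause already states the correct conclusion, so nothing breaks); and, strictly speaking, the set in the statement consists of the integer vectors $B\lambda$ rather than the $\lambda$'s themselves, a distinction you acknowledge and which is harmless because $B$ is injective, so the two sets are in bijection.
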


For completeness, we sketch how Lenstra's algorithm can be used to find a vertex of $\IP(A,b)$ when $\IP(A,b) \neq \emptyset$.
Our sketch uses induction on the dimension of $\Po(A,b)$, denoted by $\dim \Po(A,b)$. 
Clearly, Lenstra computes a vertex of $\IP(A,b)$ if $\dim\Po(A,b)=0$. 
If $\dim\Po(A,b) > 0$, then find a vertex $x^*$ of $\Po(A,b)$.
Suppose that $x^*$ uniquely maximizes $x \mapsto c^\intercal x$ over $\Po(A,b)$ for some $c \in \R^n$. 
The vector $c$ is linearly independent of the set of implied equations
\[
A^{=} := \{a \in \R^n\setminus \{0\} : \exists ~ \beta \in \R ~\text{such that}~a^\intercal x = \beta~~\forall ~x \in \Po(A,b)\}
\]
 because $x^*$ uniquely maximizes $x \mapsto c^\intercal x$, and $c$ can be chosen to have an encoding length polynomial in that of $\Po(A,b)$; see~\cite[Chapters 8 and 10]{AS1986}.
Lenstra's algorithm and binary search can find a maximizer $z^*$ of $x \mapsto c^\intercal x$ over $\Po(A,b) \cap \Z^n$.
The polyhedron ${\rm Q} := \{x \in \Po(A,b) : c^\intercal x = c^\intercal z^*\}$ satisfies $\dim Q < \dim \Po(A,b)$ because $c$ is linearly independent of $A^{=}$.
Moreover, $\conv({\rm Q} \cap \Z^n)$ is a face of $\IP(A,b)$, so vertices of $\conv({\rm Q} \cap \Z^n)$ are vertices of $\IP(A,b)$.
Thus, induction can be applied to ${\rm Q}$ to compute a vertex of $\IP(A,b)$.
%

\section{A geometric tool for the analysis of $i(A,b)$.}\label{secGeometry}

The purpose of this section is to solve the following covering problem.

\begin{definition}[Covering problem]\label{defnCCP}
Let $C \subseteq \R^{\ell}$ be finite.
The covering problem is to find sets $B, T \subseteq \R^\ell$ solving
\[
\min\big\{ |B|+|T| : ~ C \subseteq B + (T \cup \{0\}) ~\text{and}~ B, T \subseteq \R^\ell ~\text{are finite}\big\}.
\]
\end{definition}

\begin{figure}
\centering
\begin{tabular}{@{\hskip .1 cm}c@{\hskip .5 cm}c}
\includestandalone[scale=0.62]{Figs/boxes2}
&
\includestandalone[scale=0.62]{Figs/boxes3}\\[.25 cm]
\begin{tabular}{r@{\hskip .1 cm}c@{\hskip .1 cm}l}
$B$ & $=$ & $ \{[6,0]^\intercal,[0,3]^\intercal\} \cup  (\{0,1,2\}\times \{0,1\}) $\\[.1cm]
$T$ &$=$& $\{[3,0]^\intercal,[0,2]^\intercal,[3,2]^\intercal\}$
\end{tabular}
&
\begin{tabular}{r@{\hskip .1 cm}c@{\hskip .1 cm}l}
$B$ & $=$ & $  \{[0,0]^\intercal, [1,0]^\intercal,[1,1]^\intercal,[0,1]^\intercal,[0,2]^\intercal\}$\\[.1cm]
$T$ &$=$& $\{[2,0]^\intercal,[3,0]^\intercal,[5,0]^\intercal,[0,2]^\intercal,[3,2]^\intercal\}$
\end{tabular}\\[.5 cm]
(a) & (b)
\end{tabular}
\caption{
Two feasible choices of $B$ and $T$ for the covering problem are shown for $C = \{[6,0]^\intercal, [0,3]^\intercal\} \cup (\{0, \ldots, 5\} \times \{0, \ldots, 2\})  \subseteq \Z^2$.
The vectors in $C$ and $B$ are highlighted in blue and red, respectively, and the vectors in $B+(T \cup \{0\})$ are circled with a dashed line.}
\label{Fig:FigureBoxesAndTranslates}
\end{figure}
Figure~\ref{Fig:FigureBoxesAndTranslates} gives examples of $B$ and $T$ for a particular instance of the covering problem.
A trivial upper bound on the covering problem is the number of vectors in $C$. 
We collect this observation for later use. 

\begin{const}\label{constEnum}
Let $C \subseteq \R^{\ell}$ be finite.
The sets $B = C$ and $T = \emptyset$ satisfy the covering problem and $|B|+|T| = |B| = |C|$. 
\end{const}

For an instance of the covering problem, the optimal value $|B|+|T|$ must be at least $|C|^{1/2}$. 
Otherwise,
\[
|B+(T\cup\{0\})| \le |B| \cdot (|T|+1) \le (|B|+|T|)^2 < \big(|C|^{1/2}\big) \vphantom{a}^2 = |C|,
\]
which cannot occur because $C \subseteq B+(T\cup\{0\})$.
The bound of $|C|^{1/2}$ can be attained for certain choices of $C$ as seen by the following construction and Lemma~\ref{Lem:BoxAndTransLatesConstruction2}.
Bader et al.~\cite[Example 8]{BHWZ2017} demonstrated this result for $\ell = 1$.

\begin{const}\label{constSplit}
Let $\Lambda \in \Z^{\ell \times \ell}$ be a lower triangular matrix of the form
\[
\Lambda = 
\begin{pmatrix}
        \alpha_1& \\[-.15cm]
         \vdots  &\ddots &  \\[-.15 cm]
        *&  \cdots &\alpha_{\ell}
\end{pmatrix},
\]
where $\alpha_1, \ldots, \alpha_{\ell} \in \Z_{\ge 2}$ and $0 \le \Lambda_{i,j} \le \alpha_i -1$ for all $1 \le i <j$.
Set $\delta := \prod_{i=1}^{\ell} \alpha_i $, let $k_i \in \{0, \ldots, \alpha_i-1\}$ for each $i \in \{1, \ldots, \ell\}$, and define the sets
\begin{equation}\label{eq:BoxAndTranslatesConstruction2}
\begin{array}{@{\hskip -.15 cm}rl}
& B := \Lambda \cup \big(\{0,\ldots,k_1\} \times \ldots \times \{0,\ldots, k_\ell\}\big) \\[.15 cm]
\text{and } & T := \big\{z \in \Z^\ell : z_i \in \{0, k_i+1, \ldots, \beta_i  (k_i+1)\}~\forall ~i\in\{1, \ldots, \ell\}\big\} {\setminus \{0\}},
\end{array}
\end{equation}
where
 \[
	\beta_{i} := \bigg\lfloor\frac{\alpha_i-1}{k_i + 1} \bigg\rfloor  \quad \forall ~ i \in \{1,\ldots,\ell\}.
\]
\end{const}

Figure~\ref{Fig:FigureBoxesAndTranslates} (a) illustrates Construction~\ref{constSplit} for the following data:
\[
\Lambda = \begin{pmatrix} 6 & 0 \\ 0 & 3 \end{pmatrix}, ~k_1 = 2, ~ \text{and}~k_2 = 1.
\]
%


Next, we show that any choice of $k_1, \ldots, k_{\ell}$ in Construction~\ref{constSplit} yields a feasible solution to the covering problem for subsets of $\Lambda \cup \Psi(\Lambda) $, where
\[
\Psi(\Lambda)  := \{0,\ldots,\alpha_1-1\} \times \ldots \times \{0,\ldots,\alpha_\ell-1\}.
\]
Moreover, there exists a choice of $k_1, \ldots, k_{\ell}$ that attain the $|C|^{1/2}$ bound.

\begin{lemma}\label{Lem:BoxAndTransLatesConstruction2}
Let $\Lambda \in \Z^{\ell \times \ell}$ and $ k_1, \ldots, k_{\ell}$ be as in Construction~\ref{constSplit}, and let $C \subseteq \Lambda \cup \Psi(\Lambda)$.
	The sets $B$ and $T$ defined in~\eqref{eq:BoxAndTranslatesConstruction2} are a feasible solution for the covering problem.
	Also, $k_1, \ldots, k_{\ell}$ can be chosen such that 
	\[
	|B|+|T| \le 4\delta^{1/2}+\log_2(\delta).
	\]
\end{lemma}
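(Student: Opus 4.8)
The plan is to prove the two assertions of the lemma in turn: that the pair $(B,T)$ from~\eqref{eq:BoxAndTranslatesConstruction2} is feasible for every admissible choice of $k_1,\dots,k_\ell$, and then that a particular choice makes $|B|+|T|$ small. I begin with feasibility. Since $\Lambda\subseteq B$ and $0\in T\cup\{0\}$, we have $\Lambda\subseteq B+(T\cup\{0\})$, so it remains to cover $\Psi(\Lambda)$. Writing $B_0:=\{0,\dots,k_1\}\times\cdots\times\{0,\dots,k_\ell\}$ so that $B=B_0\cup\Lambda$, and observing that restoring the zero vector to $T$ yields
\[
T\cup\{0\}=\{0,k_1+1,\dots,\beta_1(k_1+1)\}\times\cdots\times\{0,k_\ell+1,\dots,\beta_\ell(k_\ell+1)\},
\]
I would use that Minkowski addition distributes over Cartesian products to write $B_0+(T\cup\{0\})$ as the product over $i$ of the sets $S_i:=\{0,\dots,k_i\}+\{0,k_i+1,\dots,\beta_i(k_i+1)\}$. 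Each $S_i$ is the union of the consecutive blocks $\{h(k_i+1),\dots,h(k_i+1)+k_i\}$, $h=0,\dots,\beta_i$, which tile $\{0,\dots,(\beta_i+1)(k_i+1)-1\}$; and $(\beta_i+1)(k_i+1)\ge\alpha_i$ because $\beta_i=\lfloor(\alpha_i-1)/(k_i+1)\rfloor$. Hence $S_i\supseteq\{0,\dots,\alpha_i-1\}$, so $B_0+(T\cup\{0\})\supseteq\Psi(\Lambda)$, and $C\subseteq\Psi(\Lambda)\cup\Lambda\subseteq B+(T\cup\{0\})$.

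For the size bound I would first record the estimates $|B|\le\prod_{i=1}^\ell(k_i+1)+\ell$ (the box $B_0$ contributes $\prod_i(k_i+1)$ points and $\Lambda$ has $\ell$ columns), $|T|\le\prod_{i=1}^\ell(\beta_i+1)$, and $\ell\le\log_2\delta$ (because $\delta=\prod_i\alpha_i\ge2^\ell$, as already noted). Thus it suffices to choose the $k_i$ so that both $\prod_i(k_i+1)$ and $\prod_i(\beta_i+1)$ are at most $2\delta^{1/2}$; it is convenient also to note the identity $\beta_i+1=\lceil\alpha_i/(k_i+1)\rceil$.

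The choice I would make splits only one coordinate. Let $j$ be the smallest index with $\prod_{i=1}^j\alpha_i>\delta^{1/2}$ — this exists because $\prod_{i=1}^\ell\alpha_i=\delta>\delta^{1/2}$ — and set $P:=\prod_{i=1}^{j-1}\alpha_i$, so $P\le\delta^{1/2}<P\alpha_j$. Put $k_i:=\alpha_i-1$ (hence $\beta_i=0$) for $i<j$, put $k_i:=0$ (hence $\beta_i=\alpha_i-1$) for $i>j$, and put $s:=k_j+1:=\lceil\delta^{1/2}/P\rceil$; the inequalities $P\le\delta^{1/2}<P\alpha_j$ give $1\le s\le\alpha_j$, so every $k_i$ is admissible. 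Then $\prod_i(k_i+1)=Ps\le P(\delta^{1/2}/P+1)=\delta^{1/2}+P\le2\delta^{1/2}$, while $\prod_i(\beta_i+1)=\lceil\alpha_j/s\rceil\cdot\delta/(P\alpha_j)\le\big(\alpha_j/s+1\big)\cdot\delta/(P\alpha_j)=\delta/(Ps)+\delta/(P\alpha_j)\le\delta^{1/2}+\delta^{1/2}=2\delta^{1/2}$, the last step using $Ps\ge\delta^{1/2}$ and $P\alpha_j>\delta^{1/2}$. Combining everything, $|B|+|T|\le(2\delta^{1/2}+\ell)+2\delta^{1/2}\le4\delta^{1/2}+\log_2\delta$.

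I expect feasibility to be routine and the size bound to be the crux. The tempting move of splitting every coordinate roughly evenly, $k_i+1\approx\alpha_i^{1/2}$, does not work: the rounding loss compounds multiplicatively over the $\ell\approx\log_2\delta$ coordinates. For instance with all $\alpha_i=3$ the only balanced per-coordinate split is $k_i+1=\beta_i+1=2$, giving $\prod_i(k_i+1)=2^\ell$, which is exponentially larger than the target $4\delta^{1/2}=4\cdot3^{\ell/2}$. The remedy used above is to incur the rounding only once — on a single coordinate — while assigning each remaining range $\{0,\dots,\alpha_i-1\}$ entirely to $B$ or entirely to the translates; beyond this, the only care needed is bookkeeping the boundary cases $j=1$ and $j=\ell$ and verifying the identity $\beta_i+1=\lceil\alpha_i/(k_i+1)\rceil$.
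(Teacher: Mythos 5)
Your proof is correct and follows essentially the same strategy as the paper: cover $\Psi(\Lambda)$ coordinate-wise by a box plus translates, and then split only a single coordinate at the balance point $\delta^{1/2}$ while assigning every other range $\{0,\dots,\alpha_i-1\}$ wholly to $B$ or to the translates. The only difference is cosmetic: the paper sorts the $\alpha_i$ and distinguishes two cases (whether the largest $\alpha_i$ exceeds $\delta^{1/2}$ or not), whereas your choice of the first index $j$ with $\prod_{i\le j}\alpha_i>\delta^{1/2}$ handles both situations in one uniform computation.
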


 \begin{proof}
  Let $z \in C$. 
  If $z \in \Lambda$, then $z = z+0 \in B+(T \cup \{0\})$. 
  Otherwise, 
  \(
  z \in \Psi(\Lambda).
  \)
   Define the scalars
	\[
	 t_i := \bigg\lfloor\frac{z_i}{k_i+1} \bigg\rfloor \cdot (k_i+1) ~~\text{and}~~ v_i := z_i -t_i \quad \forall ~ i \in \{1,\ldots,\ell\}.
	\]
	By design, $v_i \in \{0,\ldots,k_i\}$ for each $i \in \{1,\ldots,\ell\}$.
	Therefore, $v := (v_1, \ldots, v_\ell)^\intercal \in B$.
	 To show $(t_1,\ldots,t_\ell)^\intercal \in T$, note that $t_i \leq \beta_i\cdot(k_i+1)$ for each $i \in \{1,\ldots, \ell\}$.
	 Hence, $C \subseteq B+(T \cup\{0\})$.

	It is left to choose $k_1, \ldots, k_{\ell}$ such that $|B|+|T| \le 4{\delta}^{1/2}+\log_2({\delta}) $.
	We proceed with two cases. 
	To simplify the presentation, we assume without loss of generality that $\alpha_1 \leq \alpha_2\leq \ldots \leq \alpha_{\ell}$. 
	Note that $|\Psi(\Lambda)| =\prod_{i=1}^\ell \alpha_i = \delta \ge 2^{\ell}$.
		\noindent\textbf{Case 1.} Assume $\alpha_\ell = {\delta}^\tau$ for $\tau \geq 1/2$. 
		Hence,
		\[
		\prod_{i=1}^{\ell-1} \alpha_i   = {\delta}^{1 - \tau} \leq  {\delta}^{1/2}.
		\]
		Set $\sigma :=\tau - 1/2 \geq 0$ and note that $1-\tau + \sigma = 1/2$.
		Define $k_{\ell} := \lceil {\delta}^\sigma \rceil$ and $k_i := \alpha_i-1$ for each $i \in \{1, \ldots, \ell-1\}$.
		The value $\beta_{\ell}$ in~\eqref{eq:BoxAndTranslatesConstruction2} satisfies 
		%
				\[
		\beta_{\ell} 
		= \bigg\lfloor\frac{\alpha_{\ell}-1}{\big\lceil {\delta}^\sigma \big\rceil+1} \bigg\rfloor 
		= \bigg\lfloor\frac{{\delta}^\tau-1}{\big\lceil {\delta}^\sigma \big\rceil+1} \bigg\rfloor 
		\leq 
		\bigg\lfloor\frac{{\delta}^\tau}{\delta^\sigma} \bigg\rfloor 
		=\big\lfloor{\delta}^{1/2}\big\rfloor 
		\le {\delta}^{1/2},
		\]		
		and $\beta_i = 0$ for each $i \in \{1,\ldots, \ell-1\}$.
		{The equation $|T| = \beta_{\ell}$ holds because $0 \not \in T$.}
 		Set 
		\(
		 \overline{\smallBox} := \{0, \ldots, k_1\} \times \ldots \times \{0, \ldots, k_{\ell}\}.
		\)
		A direct computation proves the result in this case: 
  		\begin{align*}
  			|B|+|T| = |\overline{\smallBox}|+|T|+|\Lambda| = &~\prod_{i=1}^\ell(k_i+1) + \beta_\ell +\ell\\
			= & ~ {\delta}^{1-\tau}(\lceil {\delta}^\sigma \rceil+1) + \beta_\ell + \ell\\
			  \le &~\delta^{1-\tau+\sigma}+2\delta^{1-\tau}+\delta^{1/2}+\log_2(\delta)\\
			   \le &~4\delta^{1/2}+\log_2(\delta).
 		\end{align*}

		\noindent\textbf{Case 2.} Assume $\alpha_{\ell} < {\delta}^{1/2} $.
		 This implies that
		 \(
		 {\delta}^{1/2} < \prod_{i=1}^{\ell-1} \alpha_i .
		 \)
		 Let $j \in\{1, \ldots, \ell -2\}$ be the largest index such that 
		 \[
		 \gamma := \prod_{i = 1}^j \alpha_i \leq {\delta}^{1/2}.
		 \] 
		 Let $\sigma \geq 0$ be such that $\gamma \cdot {\delta}^ \sigma = {\delta} ^ {1/2}$.
		 Set $\alpha_{j+1} = {\delta}^\tau$, where $\tau \in (0, 1/2)$. 
		 Note that $0 \leq \sigma < \tau$ and 
		 \[
		 {\delta}^{\tau - \sigma} \cdot \prod_{i = j+2}^\ell \alpha_i = {\delta}^{1/2}.
		 \]
		Define $k_i := 0$ for each $i \in \{1, \ldots, j\}$, $k_{j+1} := \lceil {\delta}^{\tau - \sigma} \rceil$, and $k_i := \alpha_i - 1$ for each $i \in \{j+2, \ldots, \ell\}$.
		The value $\beta_{j+1}$ in~\eqref{eq:BoxAndTranslatesConstruction2} satisfies
		%
		\[
		\beta_{j+1} = \bigg\lfloor\frac{\alpha_{j+1}-1}{\big\lceil {\delta}^{\tau - \sigma} \big\rceil+1} \bigg\rfloor =  \bigg\lfloor\frac{{\delta}^\tau-1}{\big\lceil {\delta}^{\tau - \sigma} \big\rceil+1} \bigg\rfloor \le \bigg\lfloor\frac{{\delta}^\tau}{\delta^{\tau - \sigma} } \bigg\rfloor = \lfloor{\delta}^{\sigma}\rfloor.
		\]
		Hence, 
		\[
		|T| \le \prod_{i=1}^j \alpha_i \cdot \lfloor\delta^{\sigma}\rfloor \le \gamma \cdot \delta^{\sigma} = \delta^{1/2}.
		\]
		Define
		\(
		  \overline{\smallBox} := \{0, \ldots, k_1\} \times \ldots \times \{0, \ldots, k_{\ell}\}.
		\)
	A direct computation proves the result in this case:
		\begin{align*}
			|B|+|T| = |\overline{B}|+|T|+|\Lambda|
			\le &~\prod_{i=1}^\ell(k_i+1) + \delta^{1/2} +\ell\\
			\le &~ ({\delta}^{\tau - \sigma} +2 )\left(\prod_{i = j+2}^\ell \alpha_i\right)  + \delta^{1/2} + \log_2({\delta})\\
			\le &~ 4 \delta^{1/2} + \log_2({\delta}).
			  \EQqed
		\end{align*}
  \end{proof}

If one considers the covering problem for the Cartesian product of two sets $C^1$ and $C^2$, then one can construct a feasible covering by taking the Cartesian product of tuples $(B^1,T^1)$ and $(B^2,T^2)$ that cover $C^1$ and $C^2$, respectively.
The proof is straightforward, and we omit it here.
\begin{lemma}\label{lemProductFormula}
Let $C^1, C^2 \subseteq \R^\ell$.
For each $i \in \{1, 2\}$, let $B^i,T^i \subseteq \R^{\ell}$ satisfy $C^i \subseteq B^i +(T^i\cup\{0\})$. 
Then
\[
C^1\times C^2 \subseteq B^1\times B^2 + (T^1\cup\{0\} \times T^2 \cup \{0\}) \setminus\{0\}.
\]
\end{lemma}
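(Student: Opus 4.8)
The plan is to prove the inclusion by a one-line unwinding of Minkowski addition, exploiting that addition on $\R^{2\ell} \cong \R^\ell \times \R^\ell$ acts on the two $\ell$-dimensional blocks independently. The single fact I would isolate first is the elementary distributivity of the Minkowski sum over the Cartesian product: for arbitrary $X^1, X^2, Y^1, Y^2 \subseteq \R^\ell$,
\[
(X^1 \times X^2) + (Y^1 \times Y^2) \;=\; (X^1 + Y^1) \times (X^2 + Y^2),
\]
which holds because $(x^1,x^2) + (y^1,y^2) = (x^1+y^1,\,x^2+y^2)$ and every pair of this form arises in this way.

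Applying this identity with $X^i := B^i$ and $Y^i := T^i \cup \{0\}$ gives
\[
(B^1 \times B^2) + \big((T^1\cup\{0\}) \times (T^2\cup\{0\})\big) \;=\; \big(B^1 + (T^1\cup\{0\})\big) \times \big(B^2 + (T^2\cup\{0\})\big),
\]
and by hypothesis the right-hand side contains $C^1 \times C^2$. Setting $T := \big((T^1\cup\{0\}) \times (T^2\cup\{0\})\big) \setminus \{0\}$ and noting that $0 \in (T^1\cup\{0\}) \times (T^2\cup\{0\})$, we recover $(T^1\cup\{0\}) \times (T^2\cup\{0\}) = T \cup \{0\}$, so $C^1\times C^2 \subseteq (B^1\times B^2) + (T\cup\{0\})$ — exactly the asserted inclusion, now with the translate set written in the normal form (not containing $0$) used by the covering problem of Definition~\ref{defnCCP}.

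I expect no genuine obstacle: no element-by-element case distinction is needed, and the only point requiring a moment's care is the bookkeeping around $0$, namely checking that deleting the zero vector from the product translate set is harmless because $T \cup \{0\}$ reintroduces it, and that $(0,0)$ is the unique vector one must delete to match the convention. I would close by recording the cardinality consequence that makes the lemma useful when combining covers: $|B^1 \times B^2| = |B^1|\cdot|B^2|$ and $|T| \le (|T^1|+1)(|T^2|+1) - 1$, so taking products of feasible solutions to the covering problem inflates the objective $|B|+|T|$ by at most the corresponding multiplicative factor.
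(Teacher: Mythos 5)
Your proof is correct; the paper in fact omits its own proof of this lemma as ``straightforward,'' and your argument --- Minkowski addition distributes over the Cartesian product, $(X^1\times X^2)+(Y^1\times Y^2)=(X^1+Y^1)\times(X^2+Y^2)$, followed by the bookkeeping for the zero vector --- is exactly the intended one, as confirmed by how the lemma is invoked in the proof of Theorem~\ref{thmNearlySquare}, where $|B|=|B^1|\cdot|B^2|$ and $|T|\le(|T^1|+1)(|T^2|+1)-1$. Your care with $0$ is warranted rather than pedantic: read completely literally, the displayed inclusion with translate set $\bigl((T^1\cup\{0\})\times(T^2\cup\{0\})\bigr)\setminus\{0\}$ and no zero translate restored can fail (e.g.\ $C^1=C^2=B^1=B^2=\{0\}$, $T^1=T^2=\emptyset$), and the reading you adopt --- the covering-problem normal form $C^1\times C^2\subseteq B+(T\cup\{0\})$ with $B=B^1\times B^2$ and $T=\bigl((T^1\cup\{0\})\times(T^2\cup\{0\})\bigr)\setminus\{0\}$, as in Definition~\ref{defnCCP} --- is the one consistent with the lemma's use in the paper.
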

%

\section{From the covering problem to the integrality number.}\label{secSquare}

We turn the geometric constructions from Section~\ref{secGeometry} into algebraic tools for bounding $i(A,b)$.
To this end, consider a matrix $C \in \Z^{\ell\times n}$. 
Suppose that $B,T\subseteq \R^{\ell}$ are finite sets (possibly empty) such that the set of distinct columns of $C$ is contained in $B+ (T  \cup\{0\})$.
Every column $u\in C$ can be written as $u = v+t$ for some $v \in B$ and $t \in T \cup \{0\}$. 
If many representations exist, then choose one.
We can write $C$ as
\begin{equation}\label{eqDefOfW}
C = (B~~T) W, \quad \text{where}~~W:=\begin{pmatrix}  W^B \\ W^T  \end{pmatrix},
\end{equation}
$W^B \in \{0,1\}^{|B| \times n}$, and $W^T \in \{0,1\}^{|T| \times n}$.
%
%
%
\begin{lemma}\label{Lem:Tu}
  	Let $C \in \Z^{\ell \times n}$ and $B, T \subseteq \R^{\ell}$ satisfy $C \subseteq B + (T \cup \{0\})$.
	If $W\in \Z^{k \times n}$ is given as in \eqref{eqDefOfW}, then $W$ is totally unimodular.
\end{lemma}

\begin{proof}
	By Ghouila-Houri~\cite{G1962},\cite[\S 19]{AS1986}, it is enough to show that
	\begin{equation*}
		y := \sum_{w \in \widehat{W} \cap {W}^B} -w ~ + \sum_{w \in \widehat{W} \cap {W}^T} w \in \{-1,0,1\}^{n}
	\end{equation*}
	for every subset $\widehat{W}$ of the rows of $W$.
	Every column $u$ of $C$ can be written as $u = v+t$ for some $v \in B$ and $t \in T \cup\{0\}$. 
	Hence, a column of $W$ has at most two non-zero entries and every non-zero entry equals $1$.
	One of these entries is in the rows of $W^B$ while the other is in $W^T$.
	This shows $y \in \{-1,0,1\}^n$.
	\qed
\end{proof}

The previous result makes use of the covering problem to identify a totally unimodular matrix. 
The next result, which relates to~\cite[Theorem 2]{BHWZ2017}, shows that such a totally unimodular matrix can be used to bound $i(A,b)$.
%
%
%
\begin{lemma}\label{lemOptCond}
Let $C \in \Z^{\ell \times n}$ and $A \in \Z^{m\times n}$ be any submatrix of the matrix~\eqref{eqStdForm}. 
Let $W\in \Z^{k \times n}$ be a totally unimodular matrix such that each row of $C$ is a linear combination of the rows of $W$.
Then 
\(
i(A,b) \le k
\)
for all $b \in \Z^m$.  
\end{lemma}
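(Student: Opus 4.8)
The plan is to show that the mixed integer hull $\WMIP(A,b)$ coincides with $\IP(A,b)$, which by the definition of $i(A,b)$ immediately yields $i(A,b) \le k$. Since $\WMIP(A,b) \supseteq \IP(A,b)$ holds trivially (imposing $Wx \in \Z^k$ only cuts away non-integer points), it suffices to prove that every vertex $x^*$ of $\WMIP(A,b)$ is integral; equivalently, that every point $x^* \in \Po(A,b)$ with $Wx^* \in \Z^k$ can be written as a convex combination of integral points of $\Po(A,b)$. In fact I would aim for the stronger and cleaner statement that any such $x^*$ is itself integral, because the combinatorial structure coming from~\eqref{eqStdForm} is rigid enough to force this.

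The key steps, in order: First, fix $x^* \in \Po(A,b)$ with $Wx^* \in \Z^k$. Because $A$ is a submatrix of~\eqref{eqStdForm}, each row of $A$ is $\pm e_j$ for some coordinate $j$, or $\pm C_i$ for some row $C_i$ of $C$. Second, use the hypothesis that the rows of $C$ depend linearly on the rows of $W$: write $C = M W$ for some rational matrix $M$. Then $C x^* = M (W x^*) \in M \Z^k$, so each coordinate $(Cx^*)_i$ is a fixed rational number determined by the integer vector $Wx^*$ — but I must do better than ``rational'' and actually get \emph{integrality} of the relevant coordinates. Third, here is where total unimodularity of $W$ enters: consider the system $W x = W x^*$ together with the original bound constraints $A x \le b$. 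Because $W$ is totally unimodular and $b, Wx^*$ are integral, the polyhedron $Q := \{x : Ax \le b,\ Wx = Wx^*\}$ has integral vertices — this is the standard consequence of total unimodularity (a TU matrix stays TU after appending identity rows from~\eqref{eqStdForm}, and the combined coefficient matrix $[A;\ W;\ -W]$ is TU since $A$'s rows are unit vectors or rows of $C=MW$... this needs care). Fourth, observe $x^* \in Q$, so $x^*$ is a convex combination of vertices of $Q$, each of which is integral and lies in $\Po(A,b)$ and satisfies $W(\cdot) = Wx^* \in \Z^k$, hence lies in $\IP(A,b)$. Therefore $x^* \in \IP(A,b)$, and $\WMIP(A,b) = \IP(A,b)$.

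The main obstacle I anticipate is the third step: establishing that the coefficient matrix of the system defining $Q$ is totally unimodular, or otherwise that $Q$ is an integral polyhedron. It is \emph{not} automatic that stacking $A$ (whose rows include rows of $C$, which need not be TU) on top of $W$ and $-W$ gives a TU matrix — indeed $C$ itself may have large determinants. The resolution should exploit that we are not asking for $\{Ax \le b\}$ to be integral, but only the slice $\{Ax \le b,\ Wx = Wx^*\}$: on this slice the constraints coming from rows $\pm C_i$ of $A$ read $\pm (MW)_i x \le b_i$, i.e. $\pm (M_i)(Wx) \le b_i$ with $Wx$ fixed to the constant integral vector $Wx^*$, so these constraints are either satisfied or violated \emph{independently of which $x \in \{Wx = Wx^*\}$ we pick} and thus impose no new facets — they can be deleted from the description of $Q$. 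What remains is $\{x : \pm x_j \le b_{\cdot}\text{ (the unit-vector rows of }A),\ Wx = Wx^*\}$, whose constraint matrix is $[\,\mathbb{I}^n;\ -\mathbb{I}^n;\ W;\ -W\,]$, which \emph{is} totally unimodular because $W$ is. Hence $Q$ is integral, completing the argument. I would also double-check the degenerate edge cases ($Q = \emptyset$, or $\IP(A,b) = \emptyset$), which are handled uniformly by the convex-combination-of-vertices reasoning.
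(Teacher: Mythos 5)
Your argument is correct in substance and reaches the lemma by a route that differs in execution from the paper's, though it rests on the same structural insight. The paper argues vertex-by-vertex: a vertex $z^*$ of $\WMIP(A,b)$ is cut out by the $k$ rows of $W$ (with integral right-hand side $Wz^*$) together with $n-k$ rows of $A$ that are linearly independent of the rows of $W$; since $C$ lies in the row span of $W$, those rows of $A$ must be unit rows, so the basis matrix (partial identity stacked on the TU matrix $W$) is unimodular and Cramer's rule gives integrality of $z^*$. You instead fix an arbitrary $x^*\in\Po(A,b)$ with $Wx^*\in\Z^k$, pass to the slice $Q=\{x: Ax\le b,\ Wx=Wx^*\}$, note that the constraints coming from rows $\pm C_i$ are constant on the slice (because $C=MW$) and, being satisfied at $x^*$, are satisfied on all of $Q$ and hence removable, and then apply Hoffman--Kruskal-type integrality to the remaining system with TU matrix $[\mathbb{I};-\mathbb{I};W;-W]$ and integral right-hand side; this gives $x^*\in\conv(Q\cap\Z^n)\subseteq\IP(A,b)$, hence $\WMIP(A,b)=\IP(A,b)$ directly, without invoking the form of vertices of a mixed integer hull or the equivalence ``all vertices integral''. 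Two small repairs are needed: drop the aside that such an $x^*$ must itself be integral (false whenever $k<n$; your convex-combination formulation is the right one and is what you actually use), and, since $\Po(A,b)$ may be unbounded, do not write $x^*$ as a convex combination of vertices of $Q$ alone but instead cite integrality of $Q$ in the form $Q=\conv(Q\cap\Z^n)$ (valid for rational polyhedra given by a TU matrix and integral right-hand side, pointed or not), or equivalently add the standard fact that $\IP(A,b)$ inherits the recession cone of $\Po(A,b)$. With these adjustments your proof is complete and arguably more self-contained than the paper's vertex argument, at the cost of invoking the TU integrality theorem rather than a single application of Cramer's rule.
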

\begin{proof}
Let $b \in \Z^m$.
If $\WMIP(A,b) = \emptyset$, then $\IP(A,b) = \emptyset$ and hence the result is correct.
Otherwise, choose a vertex $z^*$ of $\WMIP(A,b)$.
It has the form
	\begin{equation}\label{eqVertex}
	z^* = \begin{pmatrix} A_J \\ W \end{pmatrix} ^{-1} \begin{pmatrix} b_J \\ y \end{pmatrix},
	\end{equation}
where $y \in \Z^k$ and $J \subseteq \{1, \ldots, m\}$ satisfies $|J| = n-k$.
The rows of $A_J$ are linearly independent of the rows of $W$. 
By the assumption that each row of $C$ is a linear combination of the rows of $W$, the only rows of $A$ that are linearly independent of $W$ are those not belonging to $C$.
Hence, the rows of $A_J$ are a subset of the identity matrix in~\eqref{eqStdForm}.
Thus, the invertible matrix whose rows are $A_J$ and $W$ is unimodular because $W$ is totally unimodular.
By Cramer's Rule, we have $z^* \in \Z^n$.
Hence, ${i(A,b)} \le k$ because every vertex of $\WMIP(A,b)$ is integral.
\qed
\end{proof}

{We are now prepared to prove Theorem~\ref{thmStdForm}.

\begin{proof}[of Theorem~\ref{thmStdForm}]
By Lemma~\ref{Lem:Tu} and \eqref{eqDefOfW}, there exists a totally unimodular matrix $W$ with $|B|+|T|$ many rows such that each row of $C$ is a linear combination of the rows of $W$. 
Therefore, $i(A,b) \le |B|+|T|$ by Lemma~\ref{lemOptCond}.
\qed
\end{proof}
}

We turn our attention to Theorem~\ref{thmNearlySquare}.
For the remainder of this section, we assume that $A$, $A^1$, and $A^2$ are given as in the theorem.
The first step in our proof is to perform a suitable unimodular transformation to $A$.
The next result states that the integrality number is preserved under unimodular transformations.
Recall $\Delta = \Delta(A)$.

\begin{lemma}\label{lemHNF}
Let $A \in \Z^{m\times n}$, $b \in \Z^m$, and $U \in \Z^{n\times n}$ be a unimodular matrix.
Then $i(A,b) = i(AU,b)$ and $\Delta = \Delta(AU)$.
\end{lemma}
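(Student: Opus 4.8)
The plan is to establish the two claimed equalities separately, starting with the statement about determinants. For the determinant equality $\Delta = \Delta(AU)$, observe that $\Delta(A)$ is the maximum of $|\det(B)|$ over $n \times n$ submatrices $B$ of $A$ (recall $\rank(A) = n$, so these are exactly the full-rank minors). Any $n \times n$ submatrix of $AU$ has the form $A_I U$ for some row index set $I$ with $|I| = n$, and $\det(A_I U) = \det(A_I)\det(U) = \pm \det(A_I)$ since $U$ is unimodular. Thus the multiset of absolute values of $n \times n$ minors of $AU$ coincides with that of $A$, giving $\Delta(AU) = \Delta(A) = \Delta$.

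For the equality $i(A,b) = i(AU,b)$, the key observation is that $x \mapsto U^{-1}x$ is a bijection between $\Po(A,b)$ and $\Po(AU,b)$ that also maps $\Z^n$ onto $\Z^n$ (because $U$ is unimodular, hence $U^{-1} \in \Z^{n\times n}$). First I would show that this bijection sends $\IP(A,b)$ onto $\IP(AU,b)$: it maps the lattice points of $\Po(A,b)$ onto the lattice points of $\Po(AU,b)$, and being an invertible linear map it commutes with taking convex hulls. Next, I would show that for any $W \in \Z^{k\times n}$, the mixed integer hull transforms correctly, namely $U^{-1}\,\WMIP(A,b) = W'\text{-MIP}(AU,b)$ where $W' := WU \in \Z^{k\times n}$. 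Indeed, for $x \in \Po(A,b)$ we have $Wx \in \Z^k$ if and only if $(WU)(U^{-1}x) \in \Z^k$, so the linear map carries $\{x \in \Po(A,b) : Wx \in \Z^k\}$ onto $\{x' \in \Po(AU,b) : (WU)x' \in \Z^k\}$, and again taking convex hulls commutes with the linear map.

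Putting these together: if $W$ certifies $\WMIP(A,b) = \IP(A,b)$ with $k$ rows, then applying $U^{-1}$ to both sides gives $W'\text{-MIP}(AU,b) = \IP(AU,b)$ with $W' = WU$ also having $k$ rows, so $i(AU,b) \le i(A,b)$. The reverse inequality follows by the symmetric argument applied to the unimodular matrix $U^{-1}$ and the polyhedron $\Po(AU,b)$, noting $(AU)U^{-1} = A$. Hence $i(A,b) = i(AU,b)$.

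I do not anticipate a genuine obstacle here; the only care needed is bookkeeping — making sure that the substitution $W \mapsto WU$ keeps the matrix integral (it does, since $U \in \Z^{n\times n}$) and that $WU$ has the same number of rows as $W$ (it does). The one point worth stating cleanly is that an invertible affine map $x \mapsto Mx$ satisfies $M(\conv S) = \conv(MS)$ for any set $S$, which is what lets us pass the unimodular transformation through all three hulls $\Po$, $\IP$, and $\WMIP$ uniformly.
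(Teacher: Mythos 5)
Your proposal is correct and follows essentially the same route as the paper: transform via $U^{-1}$, observe that $W \mapsto WU$ stays integral with the same number of rows and certifies the relaxation for $AU$, obtain the reverse inequality by applying the symmetric argument to the unimodular matrix $U^{-1}$, and use $|\det(U)|=1$ for the minor equality. The only cosmetic difference is that you argue with the hull equality $\WMIP(A,b)=\IP(A,b)$ while the paper phrases the same step in terms of integrality of the vertices of $\WMIP(A,b)$, which the paper notes is an equivalent definition of $i(A,b)$.
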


\begin{proof}
Let $W \in \Z^{k\times n}$ be a matrix such that all vertices of $\WMIP(A,b)$ are integral. 
The matrix $U^{-1}$ maps the vertices of $\WMIP(A,b)$ to those of ${WU}\!\mhyphen\text{MIP}(AU,b)$, and $U^{-1}$ maps $\Z^n$ to $\Z^n$.
	Thus, $WU \in \Z^{k\times n}$ and the vertices of ${WU}\!\mhyphen\text{MIP}(AU,b)$ are integral.
	Hence, {$i(A,b) \ge i(AU,b)$}. 
	To see why the reverse inequality holds, it is enough to notice that $U^{-1}$ is unimodular.
	The equation $\Delta = \Delta(AU)$ holds because $|\det(U)| = 1$.
	\qed
\end{proof}

The particular unimodular transformation that we want to apply is the one that transforms $A$ into \emph{Hermite Normal Form} (see, e.g.,~\cite{AS1986}).
%
There exists a unimodular matrix $U$ such that $AU$ is in Hermite Normal Form:
\begin{equation}\label{eqHNF}
\begin{array}{r@{\hskip .5 cm}c@{\hskip .5 cm}l}
\displaystyle {AU} = \begin{pmatrix} A^1 \\[.05 cm] A^2 \end{pmatrix}, & \text{where} & \displaystyle
A^1 = 
\begin{pmatrix}
\mathbb{I}^{n-\ell} ~~ \mymathbb{0}^{(n-\ell) \times \ell}\\[.05 cm]
A_I^1~~
\end{pmatrix}
\\[.675 cm]
&\text{and}&\displaystyle
A_I^1 =    \begin{pmatrix}
        * & ... &* & \alpha_1& \\[-.15cm]
        \vdots& &\vdots & \vdots  &\ddots & & \\[-.15 cm]
        * & ... &*& * & *   &  \alpha_{\ell}
   \end{pmatrix},
   \end{array}
\end{equation}
$\alpha_1, \ldots, \alpha_{\ell} \in \Z_{\ge 2}$, $A^1_{i,j} \le \alpha_{i}-1$ if $j < i$ and  $A^1_{i,j} = 0$ if $j > i$, and $\delta = \prod_{i=1}^{\ell} \alpha_i = \det(A^1)$.
In light of Lemma~\ref{lemHNF}, we assume {$U = \mathbb{I}^n$ and} that $A$ is in Hermite Normal Form for the rest of the section.

We are set to prove $i(A,b)  \le   [4\delta^{1/2}+\log_2(\delta) ] \cdot c(r, \Delta(A^2))$.
We refer to this as Theorem~\ref{thmNearlySquare} {\it Part 1}.

\begin{proof}[of Theorem~\ref{thmNearlySquare} Part 1]
Let $\overline{A}\,^2\in \Z^{r\times n}$ be any submatrix of $A^2$ with $\rank(\overline{A}\,^2) = r$.
There exists a matrix $V^2 \in \R^{(m-n) \times r}$ such that $A^2 = V^2 \overline{A}\,^2$, and the number of distinct columns of $\overline{A}\,^2$ is bounded by $c(r, \Delta(A^2))$.
{Construction~\ref{constEnum} yields sets $\overline{B}\,^2, \overline{T}\,^2 \subseteq \Z^r$ that cover the distinct columns of $\overline{A}\,^2$ and satisfy $ |\overline{B}\,^2|+|\overline{T}\,^2| \le c(r, \Delta(A^2)$.
The sets $B^2 := V^2\overline{B}\,^2$ and $T^2 := V^2 \overline{T}\,^2$ cover $A^2$, and they satisfy $|B^2|+|T^2| \le  |\overline{B}\,^2|+|\overline{T}\,^2| \le c(r, \Delta(A^2))$.}
%
%
Construction~\ref{constSplit} and Lemma~\ref{Lem:BoxAndTransLatesConstruction2} can be applied to $A^1_I$ to build sets $B^I, T^I \subseteq \Z^{\ell}$ {that cover the columns of $A^1_I$ and satisfy $|B^I|+|T^I| \le 4\delta^{1/2}+\log_2(\delta)$.}
%

Let $C$ be the matrix whose rows are the union of the rows of $A^1_I$ and $A^2$.
By Lemma~\ref{lemProductFormula}, we can combine $B^I, T^I$ and $B^2, T^2$ to get a covering of the distinct columns of $C$.
Applying Theorem~\ref{thmStdForm} to $C$ allows us to conclude that
\begin{align*}
i(A,b) \le k & \le |B^I|\cdot|B^2| +  (|T^I|+1)(|T^2|+1) - 1 = |B^I|\cdot|B^2|+|T^I|\\
&\le (|B^I|+|T^I|)\cdot(|B^2|+|T^2|) \le  [4\delta^{1/2}+\log_2(\delta) ] \cdot c(r, \Delta(A^2))
\end{align*}
for every $b \in \Z^m$.
\qed
\end{proof}

In order to prove Theorem~\ref{thmNearlySquare} {\it Part 2}, i.e.,
$i(A,b)  \le   [4\delta^{1/2}+\log_2(\delta) ] \cdot c(r, \Delta)$, we need to overcome the following technical hurdle.
If $r = \rank(A^2)$ is equal to $n$, then $\Delta(A^2)$ is the maximum over all $n\times n$ determinants of $A^2$ and $\Delta(A^2) \le \Delta(A)$.
However, if $r < n$, then $\Delta(A^2)$ may be larger than $\Delta(A)$. 
Below is an example, where $\Delta(A) = 5$ and $\Delta(A^2) = 6$, illustrating this phenomenon:
\[
A = \left(
\begin{array}{ccc}
1 & 0 & 0 \\ 
0 & 1 & 0 \\ 
2 & 4 & 5\\
1 & 4 & 4\\
2 & 2 & 3
\end{array}\right)
~~\text{and}~~
A^2 =  \left(
\begin{array}{ccc}
1 & 4 & 4\\
2 & 2 & 3
\end{array}\right).
\]

We overcome this hurdle in Lemma~\ref{lemGroup}.
We use properties of the parallelepiped generated by the rows of $A^1$, which induces a group of order $\delta$, and group properties that $\Delta(A)$ imposes on $A^1$ and $A^2$ jointly.
The matrix $Y := A^2 (A^1)^{-1}$ acts as a map between these groups of different orders, and we use it to bound how many distinct columns it has.
The matrix $A^2$ is one choice of $YA^1$ in the next lemma, but we state the result in generality.

\begin{lemma}\label{lemGroup}
Let $\Delta$, $\delta$, and $A^1$ be as in Theorem~\ref{thmNearlySquare}.
If $Y \in \R^{r \times n}$ satisfies $\rank(Y) = r$, $\Delta(Y) \le \Delta / \delta$, and $YA^1 \in \Z^{r\times n}$,
then $Y$ has at most $c(r, \Delta)$ many distinct columns.
\end{lemma}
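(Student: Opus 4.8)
\textbf{Proof proposal for Lemma~\ref{lemGroup}.}

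The plan is to reduce bounding the number of distinct columns of $Y$ to a statement about an integer matrix whose largest minor is controlled by $\Delta$, so that the definition of $c(r,\Delta)$ applies directly. First I would recall that $A^1$ is in the Hermite Normal Form~\eqref{eqHNF}, so $\det(A^1) = \delta$ and $(A^1)^{-1} = \tfrac{1}{\delta}\operatorname{adj}(A^1)$ has entries that are rational with denominator dividing $\delta$. Write $Y = Z (A^1)^{-1}$ where $Z := YA^1 \in \Z^{r\times n}$; this is the key substitution, turning $Y$ into an integer matrix acted on by the fixed invertible matrix $(A^1)^{-1}$. Now suppose two columns of $Y$ coincide, say $Y e_i = Y e_j$. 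The plan is to show that this forces the corresponding columns of a suitable \emph{integral} auxiliary matrix $\widehat{Y}$ to coincide as well, and conversely that distinct columns of $Y$ give distinct columns of $\widehat{Y}$, where $\widehat{Y}$ has rank $r$ and $\Delta(\widehat{Y}) \le \Delta$. Then the number of distinct columns of $Y$ is at most the number of distinct columns of $\widehat{Y}$, which is at most $c(r,\Delta)$ by definition.

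To produce $\widehat{Y}$ I would proceed as follows. Stack $A^1$ on top of $Y$: the matrix $\binom{A^1}{Y}$ has the property that for any $r$ columns of $Y$, picking those columns together with all $n$ columns' worth of data from $A^1$ does not quite give a square matrix, so instead one argues minor-by-minor. Given any $r\times r$ submatrix $Y_{S}$ of $Y$ (columns indexed by $S$, $|S|=r$), one wants to bound $|\det Y_S|$. The idea is that $\det Y_S$ can be expressed, via the Cauchy--Binet-type expansion of $Y = Z(A^1)^{-1}$ and the cofactor formula for $(A^1)^{-1}$, as $\tfrac{1}{\delta}$ times a signed sum of products $\det(Z_{S,R})\cdot\det((A^1)_{\cdot})$ of minors — but more cleanly: the matrix obtained by replacing the last $r$ rows' column-block appropriately shows that $\delta\cdot\det(Y_S)$ is an $n\times n$ minor (up to sign) of the matrix $\binom{A^1}{Z}$ with the $A^1$-rows kept and $Z$ restricted to columns $S$ padded out. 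In any case the hypothesis $\Delta(Y)\le \Delta/\delta$ is exactly what is needed: it says every $r\times r$ minor of $Y$ has absolute value at most $\Delta/\delta$, equivalently every such minor, after clearing the denominator $\delta$, is an integer of absolute value at most $\Delta$. So define $\widehat{Y}$ to be $Y$ itself if $Y$ is already integral, and otherwise observe that the \emph{distinctness pattern} of the columns of $Y$ equals that of the integral matrix $\delta Y = Z\operatorname{adj}(A^1)$, whose $r\times r$ minors equal $\delta^{r}$ times those of $Y$ — this is too big, so this naive scaling does not immediately work, and instead one must use the finer structure of the HNF of $A^1$.

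The main obstacle, and the place where the real content sits, is precisely this: controlling the largest minor of an integral matrix that has the same column-distinctness pattern as $Y$. The honest route is the group-theoretic one flagged in the text before the lemma: the rows of $A^1$ generate a sublattice of $\Z^n$ of index $\delta$, the quotient $G := \Z^n / (A^1)^{\intercal}\Z^n$ is an abelian group of order $\delta$, and $Y$ induces a homomorphism-like map on $G$; two columns $e_i, e_j$ of $Y$ are equal iff $Y(e_i - e_j) = 0$, and since $YA^1 \in \Z^{r\times n}$ this condition only depends on $e_i - e_j \bmod (A^1)^{\intercal}\Z^n$, i.e. on the class of $e_i - e_j$ in $G$. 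So the columns of $Y$ modulo $\delta$-torsion behavior are governed by how the $n$ standard basis classes sit in $G$, and the number of distinct columns of $Y$ equals the number of distinct values $Y$ takes on $\{Ye_1,\dots,Ye_n\}$, which one bounds by comparing with the integer matrix $\widehat{Y}$ whose $i$-th column records the coset $e_i + (A^1)^{\intercal}\Z^n \in G$ rendered as an integer vector via Lemma~\ref{PPPoints}. One then checks $\rank(\widehat{Y}) = r$ (inherited from $\rank Y = r$ since the map is injective on the relevant quotient) and, crucially, that $\Delta(\widehat{Y}) \le \Delta$ using that every $r \times r$ minor of $\widehat{Y}$ corresponds, through the parallelepiped of $A^1$, to $\delta$ times an $r\times r$ minor of $Y$, which is at most $\delta \cdot (\Delta/\delta) = \Delta$; here the joint group properties imposed by $\Delta(A)$ on $A^1$ and $A^2$ together (as the text hints) are what make the minors of $\widehat{Y}$ integral of the right size. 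Invoking the definition of $c(r,\Delta)$ for $\widehat{Y}$ then finishes the proof. I expect the delicate step to be verifying $\Delta(\widehat Y)\le\Delta$ cleanly — getting the denominators and the index-$\delta$ bookkeeping exactly right — rather than anything structurally surprising.
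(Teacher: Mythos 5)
There is a genuine gap, and it sits exactly where you predicted the delicate step would be. Your correct observations are that scaling by $\delta$ (i.e., passing to $\delta Y = Z\operatorname{adj}(A^1)$) inflates $r\times r$ minors by $\delta^{r}$ and therefore fails, and that one should instead produce an integral matrix with the same column-distinctness pattern as $Y$, rank $r$, and largest minor at most $\Delta$, so that the definition of $c(r,\Delta)$ applies. But the substitute you propose does not do this. The matrix $\widehat{Y}$ whose $i$-th column ``records the coset $e_i + (A^1)^{\intercal}\Z^n$'' depends only on $A^1$ and not on $Y$ at all, so it cannot control how many distinct columns $Y$ has; worse, equality of cosets does not imply equality of columns of $Y$ (if $e_i - e_j = A^1 w$ with $w\in\Z^n$, then $Ye_i - Ye_j = YA^1 w$ is an integer vector but generally nonzero), so there is no well-defined map from columns of $\widehat{Y}$ to columns of $Y$, and the claims $\rank(\widehat Y)=r$, $\Delta(\widehat Y)\le\Delta$, and ``each $r\times r$ minor of $\widehat Y$ is $\delta$ times one of $Y$'' have no basis. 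There is also a lattice mix-up: the hypothesis $YA^1\in\Z^{r\times n}$ concerns the column lattice $A^1\Z^n$, while you quotient by the row lattice $(A^1)^{\intercal}\Z^n$.

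The missing idea in the paper's proof is to act on the \emph{rows} of $Y$ and to clear denominators by left-multiplication with a single integer matrix of determinant at most $\delta$ (not $\delta^{r}$). Concretely: since each row $y$ of $Y$ satisfies $y^{\intercal}A^1\in\Z^n$, Lemma~\ref{PPPoints} writes $Y=(G+V)^{\intercal}$ with $V$ integral and the columns $G_1,\dots,G_r$ of $G$ lying in the group $\Pi=\{g\in[0,1)^n: g^{\intercal}A^1\in\Z^n\}$ of order $\delta$ (addition mod $1$). Letting $\alpha_i$ be the index of $\langle G_1,\dots,G_{i-1}\rangle$ in $\langle G_1,\dots,G_i\rangle$, one builds a lower-triangular integer matrix $E$ with diagonal $\alpha_1,\dots,\alpha_r$ such that $EG^{\intercal}$, and hence $EY$, is integral, while $|\det E|=\prod_i\alpha_i=|\langle G_1,\dots,G_r\rangle|\le\delta$ by Lagrange's theorem. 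Then every $r\times r$ minor of $EY$ equals $|\det E|$ times the corresponding minor of $Y$, so $\Delta(EY)\le\delta\cdot\Delta(Y)\le\Delta$, $\rank(EY)=r$, and invertibility of $E$ preserves column distinctness; applying $c(r,\Delta(EY))\le c(r,\Delta)$ finishes the proof. Without constructing such an $E$ (or an equivalent device whose determinant is bounded by $\delta$ rather than $\delta^r$), your argument does not close.
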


\begin{proof}
Define 
\[
\Pi := \{g \in [0,1)^n : g^\intercal A^1 \in \Z^n \}.
\]
The set $\Pi$ can be viewed as a group whose operation is addition modulo 1. 
This group is isomorphic to $\{g^\intercal A^1 : g \in \Pi\}$, which is the additive quotient group of $\Z^n$ factored by the rows of $A^1$, and the identity element is $0$.
By Lemma~\ref{PPPoints}, we see that $|\Pi| = \delta$ and for all $ z \in \Z^n $, there exists a unique  $g \in \Pi$ and $v \in \Z^n$ such that $z^\intercal = (g + v)^\intercal A^1$.
Hence, there exist matrices $G \in \R^{n \times r}$ and $V \in \Z^{n \times r}$ such that the columns of $G$ are in $\Pi$ and $YA^1 =  (G  +V)^\intercal A^1$.
So, $Y = (G+V)^\intercal$.

The columns $G_1, \ldots, G_r$ of $G$ form a sequence of nested subgroups
\[
\{0\} \subseteq \langle \{G_1\}\rangle \subseteq \ldots \subseteq \langle \{G_1, \ldots, G_r\}\rangle,
\]
where
\(
\langle \Omega \rangle := \{  \sum_{h \in \Omega} \lambda_h h  ~\mathrm{mod}~ 1 :  \lambda \in \Z^{\Omega} \} 
\)
for $\Omega \subseteq \Pi$.
Define $\alpha_1 = |\langle \{G_1\}\rangle|$ and 
\[
\alpha_i = \frac{|\langle \{G_1, \ldots, G_{i}\}\rangle|}{|\langle \{G_1, \ldots, G_{i-1}\}\rangle|}  \quad \forall ~ i \in \{2, \ldots, r\}.
\]
For each $i \in \{1, \ldots, r\}$, $\alpha_i$ is the index of $\langle \{G_1, \ldots, G_{i-1}\}\rangle$ in $\langle\{G_1, \ldots, G_{i}\}\rangle$, and it is equal to the smallest positive integer such that $\alpha_i G_i ~\mathrm{mod}~ 1 \in \langle \{G_1, \ldots, G_{i-1}\}\rangle$.
The fact that $\alpha_1, \ldots, \alpha_r$ are integers is due to Lagrange's Theorem; for more on group theory see~\cite[Chapter 1]{L2000}.
The definition of $\alpha_i$ implies that there exist integers $\beta_{i,1} ,\ldots, \beta_{i,i-1}$ such that 
\[
\alpha_i G_i - \sum_{j=1}^{i-1} \beta_{i,j} G_j 
 \in \Z^n.
\]
We create an invertible lower-triangular matrix $E \in \Z^{r\times r}$ from these linear forms as follows: the $i$-th row of $E$ is $[-\beta_{i,1}, ...,-\beta_{i,i-1}, \alpha_i, 0, ..., 0]$.
By design, $EG^\intercal \in \Z^{r \times n}$ and $E (G+V)^\intercal = E Y\in \Z^{r \times n}$.
Also,  
\[
|\det(E)| = \prod_{i =1}^r \alpha_i =
|\langle \{G_1\}\rangle| \cdot \prod_{i=2}^r \frac{|\langle \{G_1, \ldots, G_{i}\}\rangle|}{|\langle \{G_1, \ldots, G_{i-1}\}\rangle|}=
| \langle\{G_1, \ldots, G_r\}\rangle| \le {\delta},
\]
where the last inequality follows from the fact that $ \langle\{G_1, \ldots, G_r\}\rangle$ is a subgroup of $\Pi$, whose order is $\delta$.
We have $\rank(EY) = \rank(Y) = r$.
Any $r \times r$ submatrix of $EY$ is of the form $EF$ for an $r \times r$ submatrix $F$ of $Y$.
The assumption $\Delta(Y) \le \Delta / \delta$ implies $|\det(F)| \le \Delta/\delta$ and $|\det(EF)| = |\det(E)| \cdot |\det(F)| \le \Delta$.
Hence, $\Delta(EY) \le \Delta$.

Columns of $Y$ are distinct if and only if the corresponding columns of the integer-valued $E Y$ are distinct because $E$ is invertible. 
The function $c(r, \cdot)$ is nondecreasing, so $EY$ has at most $c(r, \Delta(EY)) \le c(r, \Delta)$ many distinct columns.
\qed
\end{proof}
%

\begin{proof}[of Theorem~\ref{thmNearlySquare} Part 2]
Recall the notation $A^1_I$, $A^1$, and $A^2$ in~\eqref{eqHNF}.
By Lemma~\ref{lemOptCond}, in order to bound the integrality number, it suffices to find a totally unimodular matrix $W \in \Z^{k\times n}$ whose row span contains the rows of $A^1_I$ and $A^2$. 
Hence, we may assume without loss of generality that $A^2 \in \Z^{r\times n}$ as opposed to just $\rank(A^2) = r$.

%

The matrix $A^1$ is invertible, so there exist $R \in \R^{r \times (n-\ell)}$ and $Q \in \R^{r \times \ell}$ such that
\(
A^2 = [R ~ Q ] A^1 .
\)
We can write $A$ as
\[
A 
=
\begin{pmatrix}
A^1 \\ 
A^2
\end{pmatrix}
=
\begin{pmatrix}
\mathbb{I}^n \\
R ~ Q 
\end{pmatrix}
A^1 .
\]
Note that 
\begin{equation}\label{eqRisTU}
\Delta^{\max}
\left(
\begin{pmatrix}
\mathbb{I}^n \\
R ~ Q 
\end{pmatrix}
\right)
\le {\frac{\Delta}{\delta}}.
\end{equation}
To see why~\eqref{eqRisTU} is true, take any submatrix $D$ of the matrix in~\eqref{eqRisTU}. 
The matrix in~\eqref{eqRisTU} contains $\mathbb{I}^n$, so we can extend $D$ to a matrix in $\R^{n\times n}$ with the same absolute determinant. 
Hence, $DA^1$ is an $n \times n$ submatrix of $A$ and $|\det(DA^1)| =|\det(D)| \cdot \delta \le \Delta$.

By Lemma~\ref{lemGroup}, $[R ~ Q]$ has at most $c(r, \Delta)$ many distinct columns.
Thus, $[R ~ \mymathbb{0}^{{r}\times \ell}]$ has at most $c(r, \Delta)$ many distinct columns.
We can apply Construction~\ref{constEnum} to obtain sets $B^R, T^R \subseteq \R^r$ that cover $C = [{R} ~ \mymathbb{0}^{{r}\times \ell}]$ and satisfy $ |B^R| +|T^R| \le c(r, \Delta)$.
Construction~\ref{constSplit} and Lemma~\ref{Lem:BoxAndTransLatesConstruction2} can be applied to build sets $B^I, T^I \subseteq \Z^{\ell}$ that cover the columns of $A^1_I$ and satisfy $ |B^I|+|T^I| \le 4\delta^{1/2}+\log_2(\delta)$.
%

{By Lemma~\ref{lemProductFormula}, we can combine $B^R, T^R$ and $B^I, T^I$ into sets $B,T$ that cover the columns of the matrix whose rows are the union of $ A^1_I$ and $[{R} ~ \mymathbb{0}^{{r}\times \ell}]$.
By Lemma~\ref{Lem:Tu} and~\eqref{eqDefOfW}, there exists a totally unimodular matrix $W\in \Z^{k \times n}$, where $k = |B| +|T|$, such that}
\[
\begin{pmatrix} A^1_I \\[.05 cm]
{R} ~ \mymathbb{0}^{{r}\times \ell}\end{pmatrix} = ( B~T) W.
\]
This equation implies that there is a submatrix $V \in \R^{\ell \times k}$ of $(B~T)$ such that $A^1_I = VW$.
Recalling that $A^2 = [R ~ Q ] A^1=   [R~\mymathbb{0}^{r\times \ell}] + QA^1_I$, we see that
\[
C:= \begin{pmatrix} 
A^1_I \\[.05 cm]
A^2\end{pmatrix}
=
\begin{pmatrix} 
A^1_I \\[.05 cm]
 [R~\mymathbb{0}^{r \times \ell}] + QA^1_I
 \end{pmatrix} 
 =
 \left[
(B~T)
 +
\begin{pmatrix} 
\mymathbb{0}^{\ell \times k} \\[.05 cm]
QV
 \end{pmatrix} 
 \right]
 W.
\]
Applying Lemma~\ref{lemOptCond} to $C$ allows us to conclude that
\begin{align*}
{i(A,b)} \le k  &\le |B^I| \cdot |B^R| + (|T^I| + 1) \cdot (|T^R|+1) - 1 = 
 |B^I| \cdot |B^R| + |T^I|\\
 &\le 
 ( |B^I| + |T^I| ) \cdot(|B^R| + |T^R|) 
\le [4{\delta}^{1/2}+\log_2({\delta})] \cdot c({r}, \Delta)
\end{align*}
for every $b \in \Z^m$.
\qed
\end{proof}

\section{Further applications to integer programming.}\label{secOtherIP}

In this section we apply Theorem~\ref{thmNearlySquare} to special families of integer programs. 
First, we discuss non-degenerate integer programs\footnote{The results in this section do not appear in the IPCO version of the paper~\cite{PSW2020}.}.
Next, we consider the setting of asymptotic integer programs\footnote{The results in this section correct a mistake in the IPCO version, where the set defined in~\cite[Theorem 2]{PSW2020} is incorrect. The correct set is given in (12) of that paper as well as~\eqref{eq:DefOfGSquare}.}.

\subsection{Non-degenerate integer programming.}\label{secNondegMatrices}
The matrix $A \in \Z^{m\times n}$ is {\it non-degenerate} if $\det(B) \neq 0$ for every $n\times n$ submatrix $B$ of $A$.
Integer programs with non-degenerate constraint matrices were studied by Artmann et al.~\cite{AEGOVW2016}, who proved that such problems can be solved in polynomial time when $\Delta$ is fixed; see also~\cite{GMP2020,VC2009}. 
These types of integer programs capture, among other things, optimizing over simplices.
Artmann et al.\ argue that non-degenerate matrices adhere to strict properties: either $n \le (2 f(\Delta) + 1)^{\log_2(\Delta)+3} + \log_2(\Delta)$, where $f(\Delta)$ is a number greater than or equal to $\Delta$, or $A$ has at most $n+1$ many rows~\cite[Lemma 7]{AEGOVW2016}.
The next lemma bounds the number of rows, in general.

\begin{lemma}\label{LemnondegRows}
If $A \in \Z^{m\times n}$ is non-degenerate, then $m \le n+\Delta^2$.
\end{lemma}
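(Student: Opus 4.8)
The goal is to bound the number of rows $m$ of a non-degenerate matrix $A \in \Z^{m\times n}$ by $n + \Delta^2$, where $\Delta = \Delta(A)$. The natural approach is to pick a fixed invertible $n\times n$ submatrix $A^1$ of $A$ (which exists since $\rank(A) = n$ and non-degeneracy makes every such submatrix invertible), and then show that the remaining rows cannot be too numerous. So write $A$, after reordering rows, as $\binom{A^1}{A^2}$ with $A^1$ invertible and $A^2 \in \Z^{(m-n)\times n}$; I want to show $m - n \le \Delta^2$, i.e. that $A^2$ has at most $\Delta^2$ rows.

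\textbf{Key steps.} First, pass to the matrix $Y := A^2 (A^1)^{-1} \in \R^{(m-n)\times n}$, so that every row $a$ of $A^2$ corresponds to a row $y$ of $Y$ with $y^\intercal A^1 = a^\intercal$, hence $y^\intercal A^1 \in \Z^n$. Second, observe that non-degeneracy of $A$ forces a strong condition on the rows of $Y$: if two rows $y, y'$ of $Y$ were equal, the corresponding rows of $A^2$ would be equal and $A$ would have a repeated row, giving a vanishing $n\times n$ minor (take that repeated row together with $n-1$ others) — so all rows of $Y$ are distinct. More is true: for any row $y$ of $Y$, the vector $y^\intercal A^1$ ranges over a single coset structure. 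Concretely, by Lemma~\ref{PPPoints} applied to $(A^1)^\intercal$, every integer vector $z \in \Z^n$ can be written uniquely as $z^\intercal = (g + v)^\intercal A^1$ with $g$ in the group $\Pi := \{g \in [0,1)^n : g^\intercal A^1 \in \Z^n\}$ of order $|\det(A^1)| = \delta$. Thus each row $y$ of $Y$ decomposes as $y = g + v$ with $g \in \Pi$ and $v \in \Z^n$; write $y = (g_y, v_y)$ for this decomposition. The third and crucial step is to bound the number of distinct such rows: I claim the ``fractional part'' $g_y$ can take at most $\delta$ values (one for each element of $\Pi$), and for each fixed $g \in \Pi$ the number of rows $y$ with $g_y = g$ is at most $\Delta/\delta$, because any two such rows differ by an integer vector, and if there were more than $\Delta/\delta$ of them one could build an $n\times n$ submatrix of $A$ — using $n-1$ rows of $A^1$ and one cleverly chosen integer combination, or by a volume/pigeonhole argument on $\det$ values — exceeding $\Delta$ in absolute value, or else produce two equal rows of $A$. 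Multiplying, $m - n \le \delta \cdot (\Delta/\delta) = \Delta$... which is even stronger than claimed, so the honest count must be looser: more carefully, for a fixed coset the rows lie in a translate of the integer lattice, and the non-degeneracy bound on minors of $A$ limits how many integer shifts are possible to roughly $\Delta/\delta$ per coordinate direction only in a weak sense, giving the product bound $\delta \cdot \lceil \Delta/\delta \rceil$ and hence, after accounting for the worst case $\delta = 1$, the bound $\Delta^2$; I would track the constants carefully using that $\Delta \ge \delta$ and that $A^1$ can be chosen with $\delta = \Delta$, making one factor equal to $1$ but then needing the other factor $\le \Delta^2$ via a separate minor argument.

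\textbf{Main obstacle.} The delicate point is the per-coset count: showing that within one coset of $\Pi$ the number of rows of $A^2$ is bounded, and getting the right power of $\Delta$. The clean way is to fix $n-1$ linearly independent rows of $A^1$, and for the rows of $A^2$ lying in a common coset, consider the $1\times 1$ ``residual determinants'' obtained by expanding along the last coordinate; non-degeneracy says these are all nonzero integers and each $n\times n$ minor they generate is at most $\Delta$ in absolute value, so by a pigeonhole on the (finitely many) achievable values — combined with the fact that distinct rows give distinct minors when the other $n-1$ rows are held fixed — one gets at most $2\Delta$ or so rows per coset, and hence $m - n \le \delta \cdot 2\Delta$, which is too weak. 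The right argument, which I expect the authors use, is instead to bound the \emph{total} number of distinct rows of $Y$ directly by $\Delta^2$ via Lemma~\ref{lemGroup} or a close variant: $Y$ satisfies $\rank(Y) = \text{something}$, $YA^1 \in \Z$, and a determinant bound, so it has at most $c(\cdot, \Delta)$ distinct columns — but here we want distinct \emph{rows}, so one applies the group machinery to $Y^\intercal$ or argues that the rows of $A^2$, being integer combinations governed by the order-$\delta$ group $\Pi$ and the order-$\Delta$ group attached to $A$, number at most $\delta \cdot (\Delta/\delta)$ times a rank factor that is at most $\Delta$ — yielding $\Delta^2$. Pinning down exactly which group-index inequality gives the clean $\Delta^2$ (rather than $\Delta$ or $2\Delta^2$) is the step I would spend the most care on.
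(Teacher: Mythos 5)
Your general direction --- write each row of $A^2$ as a real combination of the rows of an invertible $n\times n$ block $A^1$ and count the admissible coefficient vectors via the group $\Pi$ from Lemma~\ref{PPPoints} --- matches the paper's (done there after passing to Hermite Normal Form), but the two steps that actually produce the bound $\Delta^2$ are missing, and the one quantitative step you do assert is unsupported. First, your count never gets off the ground because you never bound the coefficient vectors: in your decomposition $y=g+v$ with $g\in\Pi$ and $v\in\Z^n$, nothing in your write-up prevents infinitely many integer parts $v$. The paper first deduces from non-degeneracy that every row of $A^2$ has the form $[\gamma+\lambda^\intercal B~~\lambda^\intercal C]$ (in HNF notation) with every entry of $\gamma$ and $\lambda$ nonzero and $\|\gamma\|_\infty,\|\lambda\|_\infty\le 1$ --- otherwise some $n\times n$ minor of $A$ would vanish or exceed $\Delta$. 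That boundedness is exactly what makes the set of admissible coefficient vectors finite, and you do not establish it.

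Second, your per-coset bound (``at most $\Delta/\delta$ rows with the same fractional part $g$'') is justified only by ``a cleverly chosen integer combination, or a volume/pigeonhole argument''; but an $n\times n$ minor of $A$ must be formed from actual rows of $A$, not from integer combinations of them, so the construction you gesture at does not yield a contradiction --- and you yourself notice that the resulting total $m-n\le\delta\cdot(\Delta/\delta)=\Delta$ looks suspicious, then retreat to vaguer variants ($\delta\cdot\lceil\Delta/\delta\rceil$, ``a rank factor at most $\Delta$'') without proving any of them. The paper's actual count is different: the relevant coefficient vectors are the $\lambda\in[-1,1]^\ell$ with nonzero entries and $\lambda^\intercal C\in\Z^\ell$; by Lemma~\ref{PPPoints} there are at most $\Delta$ such $\lambda$ in $[0,1]^\ell$, hence at most $2^\ell\Delta\le\Delta^2$ in $[-1,1]^\ell$, using $\ell\le\log_2(\Delta)$ --- this is where the second factor of $\Delta$ comes from --- i.e.\ at most $(1/2)\Delta^2$ up to sign. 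A separate argument (comparing the minors $E$ and $\overline{E}$ built from $n-1$ identity rows, one row of $A^2$, and $A^1_I$, together with the fact that there are exactly two nonzero $\epsilon\in[-1,1]$ with $\epsilon+\lambda^\intercal b^{n-\ell}\in\Z$) shows that each $\lambda$, up to multiplication by $-1$, accounts for at most two rows of $A^2$. Both the $2^\ell\le\Delta$ device and the ``at most two rows per $\lambda$'' claim are absent from your proposal, so as it stands the proof is incomplete.
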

\begin{proof}
Multiplying $A$ on the right by a unimodular matrix preserves non-degeneracy. 
Hence, assume $A$ is in Hermite Normal Form and recall the notation $A^1, A^2, \alpha_1, \ldots, \alpha_{\ell}$ and $A^1_I$ in~\eqref{eqHNF}.
The Hermite Normal Form depends on the choice of $A^1$.
We may assume $|\det(A^1)| = \Delta$ by choosing $A^1$ to be the $n\times n$ submatrix of $A$ with maximum absolute determinant.
Partition $A_I^1$ as
\[
A_I^1 = [B ~C], ~~\text{where}~~
B := \big(b^1, \ldots, b^{n-\ell}\big) ~\text{and}~C:= \begin{pmatrix}
        \alpha_1& \\[-.15 cm]
        \vdots &\ddots   \\[-.15 cm]
        *& \ldots   &  \alpha_{\ell}
       \end{pmatrix}.
\]

Every row of $A^2$ is a linear combination of the rows of $A^1$, and hence it is of the form 
\(
[\gamma + \lambda^\intercal B ~~ \lambda^\intercal C],
\)
where $\gamma \in \R^{n-\ell}$ and $\lambda \in \R^\ell$. 
Neither $\gamma$ nor $\lambda$ can have a component equal to zero, otherwise there would exist an $n\times n$ submatrix of $A$ whose determinant is equal to zero, contradicting non-degeneracy.
Furthermore, $\|\gamma\|_{\infty}, \|\lambda\|_{\infty} \le 1$, otherwise there would exist an $n\times n$ submatrix of $A$ whose determinant is larger than $\Delta$. 

Fix $\lambda \in [-1,1]^{\ell}$ such that $[\gamma + \lambda^\intercal B ~ \lambda^\intercal C]$ is a row of $A^2$ for some $\gamma \in \R^{n-\ell}$.
We claim that there exists at most one other row of $A^2$ of the form $[\overline{\gamma \vphantom{\lambda}} + \overline{\lambda}\ ^\intercal B ~ \overline{\lambda}\ ^\intercal C]$, where $\overline{\lambda} = \pm \lambda$.
Multiplying a row of $A$ by $-1$ preserves non-degeneracy, so we may assume without loss of generality that $\overline{\lambda} = \lambda$.
If $\ell = n$, then the two rows are equal to $ \lambda^\intercal C$, contradicting that $A$ is non-degenerate. 
If $\ell < n$, then consider the following $n\times n$ submatrices of $A$:
\[
D:=
\begin{pmatrix} 
\mathbb{I}^{n-\ell-1} & \mymathbb{0}^{(n-\ell-1)\times (\ell+1)} \\
\gamma+\lambda^\intercal B & \lambda^\intercal C\\
B & C\end{pmatrix}
=
\begin{pmatrix} 
1 \\[-.25 cm]
& \ddots \\[-.25 cm]
& &1 \\[-.15 cm]
* &\ldots&*&E 
\end{pmatrix}
\]
and
\[
\overline{D}:=\hspace{-.15 cm}
\begin{pmatrix} 
\mathbb{I}^{n-\ell-1} & \mymathbb{0}^{(n-\ell-1)\times (\ell+1)} \\
\overline{\gamma}+{\lambda}^\intercal B & {\lambda}^\intercal C\\
B & C\end{pmatrix}
=
\begin{pmatrix} 
1 \\[-.25 cm]
& \ddots \\[-.25 cm]
& &1 \\[-.15 cm]
* &\ldots&*&\overline{E} \end{pmatrix},
\] 
where
\[
E := 
\begin{pmatrix}
\gamma_{n-\ell} + \lambda^\intercal b^{n-\ell} & \lambda^\intercal C \\[.1 cm] 
b^{n-\ell}  & C
\end{pmatrix}
~~\text{and}~~
\overline{E} := 
\begin{pmatrix}
\overline{\gamma}_{n-\ell} + {\lambda}^\intercal b^{n-\ell} & {\lambda}^\intercal C \\[.1 cm] 
b^{n-\ell}  & C
\end{pmatrix}.
\]
Non-degeneracy of $A$ implies that $E$ and $\overline{E}$ are invertible.
Also, $[\overline{\gamma}_{n-\ell} + {\lambda}^\intercal b^{n-\ell}~{\lambda}^\intercal C]$ can be uniquely written as a linear combination of the rows of $E$ with no coefficient equal to zero.
Thus, 
\begin{equation}\label{eqNonDeg0}
\overline{\gamma}_{n-\ell} \neq {\gamma}_{n-\ell}.
\end{equation}
Assume to the contrary that a third row of $A^2$ has the form $[\tilde{\gamma} + \tilde{\lambda}^\intercal B~\tilde{\lambda}^\intercal C]$, where $\tilde{\lambda} = \pm \lambda$.
Again, we may assume without loss of generality that $\tilde{\lambda} = \lambda$. 
Similarly to above, $[\tilde{\gamma}_{n-\ell} + {\lambda}^\intercal b^{n-\ell}~{\lambda}^\intercal C]$ can be uniquely written as a linear combination of the rows of both $E$ and $\overline{E}$ with no coefficient equal to zero. 
So, 
\begin{equation}\label{eqNonDeg}
\tilde{\gamma}_{n-\ell} \neq \gamma_{n-\ell},~~\text{and}~~
\tilde{\gamma}_{n-\ell} \neq \overline\gamma_{n-\ell}.
\end{equation}
There are exactly two non-zero values $\epsilon \in [-1,1]$ such that $\epsilon + \lambda^\intercal b^{n-\ell} \in \Z$.
This contradicts~\eqref{eqNonDeg0} and~\eqref{eqNonDeg}, which proves the claim.

The number of rows of $A$ is equal to $n$ plus the number of rows of $A^2$.
Every row of $A^2$ is of the form $[\gamma + \lambda^\intercal B ~ \lambda^\intercal C]$, and the previous claim states that at most two rows of $A^2$ correspond to the same $\lambda \in [-1,1]^{\ell}$ up to multiplication by $-1$.
Thus, we complete the proof by showing that, up to multiplication by $-1$, there are at most $(1/2) \cdot \Delta^2$ many choices of $\lambda$ that correspond to a row $[\gamma + \lambda^\intercal B ~ \lambda^\intercal C]$ of $A^2$.
It is sufficient that we count the number of vectors $\lambda \in [-1,1]^{\ell}$, up to multiplication by $-1$, that have non-zero components and satisfy $\lambda^\intercal C \in \Z^{\ell}$. 
{The vectors $\lambda$ that we want to count, along with their negatives, are contained in the set
\[
{\Pi} := \big\{\lambda \in [-1,1]^\ell :\lambda^\intercal C \in \Z^{\ell}~\text{and every component of}~\lambda~\text{is non-zero}\big\}.
\]
Therefore, we complete the proof by establishing $|{\Pi}| \le \Delta^2$.
}

{For each $I \subseteq \{1, \ldots, \ell\}$, consider the half-open unit cube
\[
\begin{array}{rcl}
{K^I}  &:=& \big\{ \lambda \in \R^{\ell} : \lambda_i \in [0,1) ~~\forall ~i \in I~\text{and}~\lambda_i \in (-1,0] ~~\forall ~i \not \in I \big\}
\end{array}
\]
and its topological closure $\overline{K^I}$.
Define the sets
\[
\begin{array}{rcl}
{\Pi^I} &:=& \big\{\lambda \in {K^I} :\lambda^\intercal C \in \Z^{\ell}~\text{and every component of}~\lambda~\text{is non-zero}\big\}~\text{and}\\[.1cm]
\overline{\Pi^I} &:=& \big\{\lambda \in \overline{K^I} :\lambda^\intercal C \in \Z^{\ell}~\text{and every component of}~\lambda~\text{is non-zero}\big\}.
\end{array}
\]
By Lemma~\ref{PPPoints} and the fact that each $\lambda \in \Pi^I$ is non-zero in every component, we see that $|\Pi^{I}| \le \Delta-1$.
The fact also implies that $\overline{\Pi^I} = \Pi^I \cup \{e^I\}$, where $e^I \in \Z^{\ell}$ is defined component-wise to be $e^I_i := 1$ for each $i \in I$ and $e^I_i := -1$ otherwise. 
Hence, $|\overline{\Pi^I}|\le \Delta$ for each $I$.
Observe that $\Pi = \bigcup_{I \subseteq \{1, \ldots, \ell\}} \overline{\Pi^I}$, so 
\[
|\Pi| \le \sum_{I \subseteq \{1, \ldots, \ell\}} \big|\overline{\Pi^I}\big| \le 2^{\ell} \cdot \Delta.
\]
Recall that $\alpha_1, \ldots\alpha_{\ell} \ge 2$.
Therefore, $2^{\ell}\le \prod_{i=1}^\ell \alpha_i = \Delta$ and $|\Pi| \le \Delta^2$. }
%
\qed
\end{proof}

The result of Artmann et al.~\cite{AEGOVW2016} implies that 
\[
i(A,b) \le (2 \Delta + 1)^{\log_2(\Delta)+3} + \log_2(\Delta) \quad \forall ~ b \in \Z^m.
\]
After writing $A$ as in Theorem~\ref{thmNearlySquare}, then we can use Lemma~\ref{LemnondegRows} to bound $r$ by $\Delta^2$ and obtain the following improvement to Artmann et al.
\begin{corollary}
Let $A \in \Z^{m\times n}$ be non-degenerate. 
Then
\[
i(A,b) \le [4\Delta^{1/2}+\log_2(\Delta)] \cdot [\Delta^{6+\log_2\log_2(\Delta)}+1] \quad \forall ~ b \in \Z^m.
\]
\end{corollary}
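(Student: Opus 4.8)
The plan is to combine Theorem~\ref{thmNearlySquare} (in its second form, using $c(r,\Delta)$) with the row bound from Lemma~\ref{LemnondegRows}, and then substitute the explicit estimate~\eqref{eqDistinctCols} for $c(r,\Delta)$. First I would write $A = \bigl(\begin{smallmatrix} A^1 \\ A^2\end{smallmatrix}\bigr)$ as in the hypothesis of Theorem~\ref{thmNearlySquare}, where $A^1 \in \Z^{n\times n}$ is an invertible submatrix of $A$ chosen so that $\delta := |\det(A^1)|$ is as large as possible; since $A$ is non-degenerate, such an $A^1$ exists and $\delta \le \Delta$. Set $r := \rank(A^2)$. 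Theorem~\ref{thmNearlySquare} then gives
\[
i(A,b) \le [4\delta^{1/2} + \log_2(\delta)] \cdot c(r, \Delta) \le [4\Delta^{1/2} + \log_2(\Delta)] \cdot c(r, \Delta),
\]
where the second inequality uses monotonicity of $t \mapsto 4t^{1/2} + \log_2(t)$ together with $1 \le \delta \le \Delta$.

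Next I would bound $r$. By Lemma~\ref{LemnondegRows}, the total number of rows of $A$ is at most $n + \Delta^2$, so $A^2$ has at most $\Delta^2$ rows, whence $r = \rank(A^2) \le \Delta^2$. Since $c(r, \cdot)$ is evidently nondecreasing in $r$ as well (a witnessing matrix for rank $r$ can be padded to a witnessing matrix for rank $r+1$ with the same largest minor, after appending a unit row/column), we get $c(r,\Delta) \le c(\Delta^2, \Delta)$. Plugging $r = \Delta^2$ into the bound~\eqref{eqDistinctCols} for $\Delta \ge 2$ yields
\[
c(\Delta^2, \Delta) \le \Delta^{2 + \log_2\log_2(\Delta)} \cdot (\Delta^2)^2 + 1 = \Delta^{6 + \log_2\log_2(\Delta)} + 1.
\]
Combining the two displays gives $i(A,b) \le [4\Delta^{1/2} + \log_2(\Delta)] \cdot [\Delta^{6 + \log_2\log_2(\Delta)} + 1]$, as claimed. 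I would also note the trivial case: if $\Delta = 1$ then $A$ is unimodular, $\Po(A,b) = \IP(A,b)$, and $i(A,b) = 0$, so the bound holds vacuously; alternatively one may simply take $\Delta \ge 2$ throughout since the statement is only interesting there.

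The only genuine subtlety — and the step I would be most careful about — is the monotonicity of $c(r,\Delta)$ in $r$, which is used implicitly when we replace $r$ by its upper bound $\Delta^2$; this needs the brief padding argument above (append a row and column forming a unit vector, which does not increase the largest full-rank minor and increases the rank and number of distinct columns by one). Everything else is bookkeeping: checking that the hypothesis "$\delta \ge 1$" of Theorem~\ref{thmNearlySquare} is met (it is, with equality exactly when $\Delta = 1$), that $\delta \le \Delta$ so the substitution $4\delta^{1/2} + \log_2\delta \le 4\Delta^{1/2} + \log_2\Delta$ is valid, and that $r \le \Delta^2$ feeds correctly into~\eqref{eqDistinctCols}. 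No new ideas are required beyond assembling the already-proved pieces.
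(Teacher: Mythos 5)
Your proposal is correct and follows essentially the same route as the paper: decompose $A$ as in Theorem~\ref{thmNearlySquare}, bound $r=\rank(A^2)\le\Delta^2$ via Lemma~\ref{LemnondegRows}, and substitute into~\eqref{eqDistinctCols} together with $\delta\le\Delta$. The only superfluous step is your monotonicity-of-$c(r,\Delta)$-in-$r$ padding argument (which is fine, but unnecessary): one can simply plug $r\le\Delta^2$ into the explicit bound $\Delta^{2+\log_2\log_2(\Delta)}r^2+1$, which is evidently nondecreasing in $r$.
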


\subsection{Asymptotic integer programming.}\label{secGenMatrices}

In this section we consider solutions of an integer program close to a vertex of $\Po(A,b)$ for varying choices of $b \in \Z^m$.
The bound on the integrality number $i(A,b)$ provided in Theorem~\ref{thmNearlySquare} may depend quadratically on the number of inequalities in the system $A^2x \le b^2$.
Lemma~\ref{Lemma:Proximity} gives a sufficient condition for when one can remove polyhedral constraints from $A x \le b$ to bound $i(A,b)$.
The proof of the lemma uses the following proximity result due to Paat et al.~\cite{PWW2018}\footnote{The authors in~\cite{PWW2018} consider $\WMIP$s with $W =  [\mathbb{I}^k~ \mymathbb{0}^{n \times k}]$. 
The proof of Lemma~\ref{lemCook} follows nearly verbatim the proof of Theorem 2 in~\cite{PWW2018} by replacing $[\mathbb{I}^k~ \mymathbb{0}^{n \times k}]$ with a general $W$.}.
Set $\Delta^{\max} := \Delta^{\max}(A)$. 

\begin{lemma}[Theorem 2 in~\cite{PWW2018}]\label{lemCook}
Let $b \in \Z^m$ and $W \in \Z^{k\times n}$.
Given any vertex of $z^*$ of $\WMIP(A,b)$, there exists a vertex $x^*$ of $\Po(A,b)$ such that $\|z^* - x^*\|_{\infty} \le k\Delta^{\max}$. 
\end{lemma}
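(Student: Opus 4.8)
The plan is to deduce the statement from the proximity theorem of Cook, Gerards, Schrijver and Tardos by exhibiting $z^*$ as an optimal solution of a mixed integer program over $\Po(A,b)$. Concretely, I would use Cook et al.\ in the following form: for a program $\max\{c^\intercal x : Ax \le b,\ Wx \in \Z^k\}$ whose linear relaxation $\max\{c^\intercal x : Ax \le b\}$ has an optimal solution, every optimal solution of the mixed integer program lies within $\|\cdot\|_\infty$-distance $n\,\Delta^{\max}(A)$ of an optimal vertex of the relaxation. The remaining work is to check that the hypotheses of this theorem hold here and that the constraint matrix whose largest minor enters the bound is $A$ itself, not an enlarged matrix.

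Since $\rank(A)=n$, the polyhedron $\Po(A,b)$ is pointed, and by Meyer's theorem $\WMIP(A,b)=\conv\{x\in\Po(A,b):Wx\in\Z^k\}$ is again a pointed rational polyhedron. Hence the vertex $z^*$ is the unique maximizer of $c^\intercal x$ over $\WMIP(A,b)$ for some $c$, which we may take integral after scaling. Because $z^*\in\Po(A,b)$ and $Wz^*\in\Z^k$, the point $z^*$ is feasible, and therefore optimal, for $\max\{c^\intercal x : Ax\le b,\ Wx\in\Z^k\}$.

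Next I would show the relaxation $\max\{c^\intercal x : Ax\le b\}$ is bounded, so that it has an optimal vertex. If it were not, there would be a rational recession direction $v$ of $\Po(A,b)$ with $c^\intercal v>0$, and after scaling $v$ we may assume $Wv\in\Z^k$. Then $z^*+tv\in\{x\in\Po(A,b):Wx\in\Z^k\}$ for every $t\in\Z_{\ge 0}$, so by convexity and closedness $z^*+\R_{\ge 0}v\subseteq\WMIP(A,b)$, contradicting that $z^*$ maximizes $c^\intercal x$ over $\WMIP(A,b)$. Thus the relaxation has an optimal vertex solution.

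Applying the Cook et al.\ proximity theorem to the mixed integer optimum $z^*$ then yields an optimal solution $x^*$ of $\max\{c^\intercal x : Ax\le b\}$, which we may take to be a vertex of $\Po(A,b)$, with $\|z^*-x^*\|_\infty\le n\,\Delta^{\max}(A)=n\,\Delta^{\max}$. The step I expect to be the main obstacle is that the integrality constraint here is $Wx\in\Z^k$ rather than integrality of a subset of coordinates of $x$: one must either cite Cook et al.\ in the form permitting integrality constraints on linear forms, or verify that their conformal-decomposition argument goes through verbatim because the integral vectors it produces arise from Cramer's rule applied to submatrices of $A$ alone, so the norm bound is governed by $\Delta^{\max}(A)$. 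Encoding $Wx$ by appending auxiliary rows $y=Wx$, by contrast, would in general enlarge the largest minor of the constraint matrix and spoil the constant, which is precisely why the lemma is phrased as a consequence of, rather than a verbatim quotation of, the classical theorem.
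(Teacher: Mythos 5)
The paper itself gives no proof of this lemma: it is stated as a quotation of the proximity theorem of Cook, Gerards, Schrijver and Tardos and simply declared to be ``implied by'' it. Your proposal carries out exactly the reduction the paper leaves implicit, and it is sound in outline: passing to an integral objective $c$ for which $z^*$ is the unique maximizer over $\WMIP(A,b)$, the recession argument showing the relaxation $\max\{c^\intercal x : Ax\le b\}$ is bounded, and, most importantly, the correct identification of the real subtlety --- the integrality constraint is $Wx\in\Z^k$ rather than integrality of coordinates. Your resolution of that subtlety is the right one: in the Cook et al.\ argument the cone is defined by rows of $A$ alone, so its generators can be taken integral with $\|\cdot\|_\infty\le \Delta^{\max}(A)$ by Cramer's rule, and since $W$ is integral these generators satisfy $Wg\in\Z^k$, so the rounding step preserves $Wx\in\Z^k$ and the bound $n\Delta^{\max}(A)$ is untouched; appending rows $y=Wx$ would indeed spoil the constant.

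One step is not justified as written: the proximity theorem gives an \emph{optimal solution} $x^*$ of $\max\{c^\intercal x: Ax\le b\}$ with $\|z^*-x^*\|_\infty\le n\Delta^{\max}$, and you cannot simply ``take it to be a vertex'' --- replacing it by a vertex of the optimal face may increase the distance. Two easy repairs: (i) perturb $c$ inside the full-dimensional interior of the normal cone of $z^*$ so that the linear relaxation has a unique (hence vertex) optimum; or, cleaner, (ii) observe that for your choice of $c$ the only optimal solution of $\max\{c^\intercal x : Ax\le b,\ Wx\in\Z^k\}$ is $z^*$ itself, and apply the \emph{other} direction of the proximity theorem starting from an optimal vertex $x^*$ of $\Po(A,b)$: the optimal mixed integer solution guaranteed within distance $n\Delta^{\max}$ of $x^*$ must be $z^*$, which is precisely the statement with $x^*$ a vertex. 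In the paper's only application (Lemma~\ref{Lemma:Proximity}, where $A_I$ is square and $\Po(A_I,b_I)$ has a unique vertex) the issue is invisible, but as a standalone statement the lemma needs one of these fixes.
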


A set $I \subseteq \{1, \ldots, m\}$ is a \emph{basis} if $|I| = n$ and $\rank(A_I) = n$, and $I$ is \emph{feasible} for $\Po(A,b)$ if $(A_I)^{-1}b_I\in \Po(A,b)$.
We say that $A_I \in \Z^{n\times n}$ is a \emph{basis matrix} if $I$ is a basis. 

\begin{lemma}\label{Lemma:Proximity}
    Let $b \in \Z^m$ and $W \in \Z^{k \times n}$.
   Let $I \subseteq\{1,\ldots,m\}$ a feasible basis, $z^*$ be a vertex of $\WMIP(A_I,b_I)$, and $j \in \{1, \ldots, m\} \setminus I$.
    If
    \[
    A_jA_I^{-1} b_I+ (n\Delta^{\max})^2 \le b_j ,
    \]
    then $A_j z^* \le b_j$.
     If the above holds for all $j\in \{1, \ldots, m\} \setminus I$, then $z^* \in \Po(A,b)$.
\end{lemma}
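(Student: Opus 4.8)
The plan is to reduce the statement to the proximity bound of Lemma~\ref{lemCook}, applied not to $\Po(A,b)$ but to the sub-system $\Po(A_I,b_I) = \{x \in \R^n : A_I x \le b_I\}$ determined by the feasible basis $I$. The key structural observation is that, since $I$ is a basis, $A_I \in \Z^{n\times n}$ is invertible, so $\Po(A_I,b_I)$ is a translated simplicial cone: any vertex must make all $n$ of its defining constraints tight, hence $\Po(A_I,b_I)$ has the single vertex $A_I^{-1}b_I$. If $\WMIP(A_I,b_I) = \emptyset$ there is no such $z^*$ and the claim is vacuous, so we may assume otherwise.

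First I would record that $\Delta^{\max}(A_I) \le \Delta^{\max}(A) = \Delta^{\max}$, because every submatrix of $A_I$ is a submatrix of $A$. Applying Lemma~\ref{lemCook} with $A_I$ and $b_I$ in place of $A$ and $b$ (note $A_I$ has rank $n$), the vertex $z^*$ of $\WMIP(A_I,b_I)$ satisfies $\|z^* - A_I^{-1}b_I\|_\infty \le n\,\Delta^{\max}(A_I) \le n\Delta^{\max}$, since $A_I^{-1}b_I$ is the unique vertex of $\Po(A_I,b_I)$.

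Next, fix $j \in \{1,\dots,m\}\setminus I$ and split $A_j z^* = A_j A_I^{-1}b_I + A_j\big(z^* - A_I^{-1}b_I\big)$. Each entry of $A$ is a $1\times 1$ minor, so every entry of $A_j$ has absolute value at most $\Delta^{\max}$; combined with the proximity bound this gives $\big|A_j(z^* - A_I^{-1}b_I)\big| \le n\Delta^{\max}\cdot n\Delta^{\max} = (n\Delta^{\max})^2$. Hence $A_j z^* \le A_j A_I^{-1}b_I + (n\Delta^{\max})^2 \le b_j$, the last inequality being the hypothesis; this proves the first assertion. For the final sentence, if the displayed inequality holds for every $j \notin I$, then $A_j z^* \le b_j$ for all such $j$ by what we just showed, while for $j \in I$ we have $A_j z^* \le b_j$ simply because $z^* \in \WMIP(A_I,b_I) \subseteq \Po(A_I,b_I)$. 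Together these give $A z^* \le b$, i.e.\ $z^* \in \Po(A,b)$.

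The main thing to get right is the reduction in the first paragraph: recognizing that the relevant proximity statement is the one for the square basis system $\Po(A_I,b_I)$, whose unique vertex is $A_I^{-1}b_I$, and checking that invoking Lemma~\ref{lemCook} there produces a bound governed by $\Delta^{\max}(A)$ rather than a larger quantity. The remaining arithmetic — bounding $A_j(z^*-A_I^{-1}b_I)$ entrywise — is routine.
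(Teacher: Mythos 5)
Your proof is correct and follows essentially the same route as the paper: apply Lemma~\ref{lemCook} to the square system $\Po(A_I,b_I)$ with its unique vertex $A_I^{-1}b_I$, then bound $|A_j(z^*-A_I^{-1}b_I)|$ by $(n\Delta^{\max})^2$ using $\|A_j\|_\infty \le \Delta^{\max}$. Your explicit checks that $\Delta^{\max}(A_I)\le\Delta^{\max}(A)$ and that constraints indexed by $I$ are satisfied because $z^*\in\Po(A_I,b_I)$ are details the paper leaves implicit, but the argument is the same.
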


   \begin{proof}
    Let $x^* := A_I^{-1} b_I$ be the unique vertex of $\Po(A,b)$ with respect to the basis $I$.
     Applying Lemma~\ref{lemCook} to $\Po(A_I,b_I)$ and $\WMIP(A_I,b_I)$ shows that $z^*$ satisfies $\|z^* - x^*\|_{\infty} \le k\Delta^{\max}$.
    {Recall that $\Delta^{\max}$ is an upper bound on the absolute value of all determinants of $A$ of any size.
    In particular, $\Delta^{\max}$ is an upper bound on the absolute values of the $1\times 1$ determinants, i.e., on the largest absolute entry $\|A\|_{\infty}$ of $A$.}
	Thus, 
    \[
        |A_j z^*-A_jx^*|  \leq \|A_j\|_1 \cdot \|z^* - x^*\|_\infty \leq (n\|A\|_\infty) \cdot (k\Delta^{\max}) \leq  (n\Delta^{\max})^2.
    \]
    The assumption $A_jA_I^{-1} b_I+ (n\Delta^{\max})^2 \le b_j $ implies that
    \[
    A_j z^*\leq A_jx^* +|A_jz^*-A_jx^*| \leq A_jx^* + (n\Delta^{\max})^2  \le b_j. \qquad \qed
    \]
    %
    %
%
\end{proof}

Each basis matrix $A_I$ is square, so we can apply Theorem~\ref{thmNearlySquare} to represent the integer hull $\IP(A_I,b_I)$ as a mixed integer hull $\WMIP(A_I,b_I)$ with $O(|\det(A_I)|^{1/2})$ many integrality constraints. 
If the conditions of Lemma~\ref{Lemma:Proximity} are met for all $j\in \{1, \ldots, m\} \setminus I$, then we can test feasibility of $\Po(A,b) \cap \Z^n$ using the following algorithm. 
First, find a feasible basis $I$ for $\Po(A,b)$. 
Next, compute a mixed integer hull $\WMIP(A_I,b_I)$ that is equal to $\IP(A_I, b_I)$. Finally, test if $\WMIP(A_I,b_I)$ has a vertex.

It is a strong assumption to suppose Lemma~\ref{Lemma:Proximity} holds for all $j\in \{1, \ldots, m\} \setminus I$.
However, the assumption is met by most choices of $b$.
To formalize the term `most', define the \emph{density of a set $\cA \subseteq \Z^m$} to be
\[
\pr(\cA) := \liminf_{t \to \infty} \frac{|\{-t, \ldots, t\}^m \cap \cA|}{|\{-t, \ldots, t\}^m|}. 
\]
The value $\pr(\cA)$ can be interpreted as the likelihood that the family $\{ \IP(A,b) : b \in \cA\}$ occurs in $ \{ \IP(A,b) : b \in \Z^m\}$.
The functional is not formally a probability measure but rather a lower density function {found in number theory~\cite[Page xii and \S 16]{N2000}}.
Asymptotic integer programs were first considered by Gomory~\cite{G1965} and Wolsey~\cite{W1981}, who showed that the {integer programming} value function is asymptotically periodic. 
This asymptotic setting does not capture every integer program, but as Gomory observed, it can help us understand how $\Delta$ affects $\IP(A,b)$. 
Moreover, certain questions about general integer programs can be reduced to the asymptotic setting.
Wolsey~\cite{W1981} argued that asymptotic integer programs can reduce so-called parametric integer programming to a finite number of cases.
Bruns and Gubeladze demonstrated that the so-called Integer Carath\'{e}odory number can be bounded if certain asymptotic conditions hold true~\cite{BG2004}. 
The functional $\pr(\cdot)$ has also been used to study sparse integer solutions in~\cite{ADOO2017,OPW2020,OPW2019}.

Define the set
\begin{equation}\label{eq:DefOfGSquare}
	\cG := \left\{ b \in \Z^m: 
	\begin{array}{l}
	\text{Either} ~ \Po(A,b) = \emptyset ~\text{or}~A_jA_I^{-1}  b_I + (n\Delta^{\max})^2 \le b_j\\[.1 cm]
	\hspace{.5 in} \forall~ \text{feasible bases } I \text{ and } j \in  \{1,\ldots,m\} \setminus I
		\end{array}\right\}.
\end{equation}
{To motivate the definition of $\cG$, consider the following.
Suppose $\Po(A,b)$ is non-empty and fix a feasible basis $I$. 
If $b_j$ is increased enough for some $j \not \in I$, then the slack $b_j - A_j z$ becomes non-negative for every $z \in \Po(A_I,b_I)$ that is sufficiently close to the vertex $A_I^{-1}b_I$.
In other words, integer vectors $z$ in $\Po(A_I, b_I)$ that are close to $A_I^{-1}b_I$ will satisfy the constraint $A_j z \le b_j$ (see Lemma~\ref{Lemma:Proximity}).
The set $\cG$ contains those right hand side vectors $b$ for which this slack condition is satisfied for all bases $I$ and every $j \not \in I$.
{Following this slack interpretation, it may be evident that $\Pr(\cG) = 1$}; indeed, this is true and we formally argue it below.
In Gomory's work~\cite{G1965}, he analyzes a set $\cG_I$ of right hand sides that are asymptotically deep within the cone $\cone(A_I)$ generated by the columns of $A_I$ for a fixed basis $I$.
Although he does not discuss density, it can be determined that $\cG_I$ has an asymptotic density of one among the integer vectors in $\cone(A_I)$.
The set $\cG$ is the union of $\cG_I$ over all feasible bases $I$, minus common intersection between different $\cG_I$.}

If $b \in \cG$, then either $\Po(A,b)$ is empty, in which case $\IP(A,b)$ is also empty, or testing feasibility of $\IP(A,b)$ can be reduced to testing feasibility of the mixed integer hull $\WMIP(A_I,b_I)$ constructed in Theorem~\ref{thmNearlySquare} for any feasible basis $I$.
{We prove $\Pr(\cG) = 1$ by proving $\Pr(\Z^m \setminus \cG) = 0$.}
We show the latter by demonstrating that $\Z^m \setminus \cG$ is contained in a finite union of hyperplanes in $\Z^m$. 
Given a basis $I$, we set $\Delta_I := |\det(A_I)|$.

\begin{lemma}\label{Lemma:BadPointsOnHyperplanes}
It follows that
\[
	\Z^m\setminus \cG \subseteq\bigcup_{\substack{I \subseteq \{1,\ldots,m\} \\ I ~\mathrm{basis}}} ~ \bigcup_{j \not \in I} ~\bigcup_{r = 0}^{\Delta_I(n\Delta^{\max})^2-1}\{b \in \Z^m : \Delta_I b_j = \Delta_I A_jA_I^{-1} b_I + r\}.
\]
{Furthermore, $|\{-t, \ldots, t\}^m \cap (\Z^m \setminus \cG)| \in O(t^{m-1})$ for each $t \in \Z_{\ge 1}$.}
\end{lemma}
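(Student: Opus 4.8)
The goal is to show that every $b\in\Z^m$ lying outside $\cG$ is captured by one of the listed hyperplanes. The plan is to start from the definition of $\cG$ in~\eqref{eq:DefOfGSquare} and unpack what it means for $b$ to fail membership. If $b\notin\cG$, then $\Po(A,b)\ne\emptyset$ and there is some feasible basis $I$ and some index $j\in\{1,\dotsc,m\}\setminus I$ such that the inequality $A_jA_I^{-1}b_I + (n\Delta^{\max})^2 \le b_j$ is violated, i.e.
\[
b_j < A_jA_I^{-1}b_I + (n\Delta^{\max})^2 .
\]
The issue is that $A_jA_I^{-1}b_I$ need not be an integer, so I cannot directly read off an integer ``slack''. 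The fix is to clear denominators by multiplying through by $\Delta_I = |\det(A_I)|$. Since $A_I^{-1} = \operatorname{adj}(A_I)/\det(A_I)$ and $A_j, A_I, b_I$ are all integral, the quantity $\Delta_I A_jA_I^{-1}b_I$ is an integer. Hence $\Delta_I b_j$ and $\Delta_I A_jA_I^{-1}b_I$ are both integers.

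The second step is to produce the correct range for $r$. From the strict inequality above, multiplying by $\Delta_I>0$ gives
\[
\Delta_I b_j < \Delta_I A_jA_I^{-1}b_I + \Delta_I (n\Delta^{\max})^2 .
\]
On the other hand, I also need a lower bound so that the difference $r := \Delta_I b_j - \Delta_I A_jA_I^{-1}b_I$ is nonnegative. This should come from feasibility of $x^* := A_I^{-1}b_I$ for $\Po(A,b)$: since $I$ is a \emph{feasible} basis, $A_jx^* \le b_j$, i.e. $A_jA_I^{-1}b_I \le b_j$, hence $r \ge 0$. Combining, $r$ is an integer with $0 \le r \le \Delta_I(n\Delta^{\max})^2 - 1$ (the strict upper bound $\Delta_I(n\Delta^{\max})^2$ becomes $\le \Delta_I(n\Delta^{\max})^2-1$ for integers). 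This places $b$ on the hyperplane $\{b\in\Z^m : \Delta_I b_j = \Delta_I A_jA_I^{-1}b_I + r\}$ for this particular $I$, $j$, and $r$, which is exactly one of the sets in the claimed union.

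The third step is just bookkeeping: take the union over all bases $I$, all $j\notin I$, and all admissible $r$, which yields the stated containment. The main (mild) obstacle is the integrality argument of step one — making sure that $\Delta_I A_jA_I^{-1}b_I\in\Z$ via the adjugate formula, and correctly handling the conversion of the strict inequality into an integer inequality with the right endpoint. I would also remark that each such set is genuinely a hyperplane in $\R^m$ (it is an affine equation in $b$ that is nontrivial because the coefficient of $b_j$ on the left is $\Delta_I\ne 0$ and $j\notin I$ so $b_j$ does not appear on the right), which is what makes $\Z^m\setminus\cG$ a subset of a finite union of hyperplanes and hence sets up the density-zero conclusion $\pr(\Z^m\setminus\cG)=0$ in the sequel.
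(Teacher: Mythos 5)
Your proposal is correct and follows essentially the same route as the paper: negate the definition of $\cG$ to get a feasible basis $I$ and $j\notin I$ with $b_j < A_jA_I^{-1}b_I + (n\Delta^{\max})^2$, multiply by $\Delta_I$ and use Cramer's Rule (your adjugate formula) for integrality of $\Delta_I A_jA_I^{-1}b_I$, and use feasibility of $A_I^{-1}b_I$ to get the nonnegative integer slack $r \le \Delta_I(n\Delta^{\max})^2-1$. The only differences are cosmetic (adjugate vs.\ Cramer, and your extra remark that each set is a genuine hyperplane, which the paper defers to the discussion after the lemma).
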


\begin{proof}
	If $b \in \Z^m \setminus \cG$, then there exists a feasible basis $I$ for $\Po(A,b)$ and $j \in  \{1,\ldots,m\} \setminus I$ such that $b_j < A_jA_I^{-1} b_I  + (n\Delta^{\max})^2$.
	Thus,
    \begin{align*}
         \Z^m\setminus \cG \subseteq &\left\{b \in \Z^m:
         \begin{array}{l} 
         \exists \text{ a feasible basis } I \text{ and } j \in \{1,\ldots,m\}\setminus I \\[.05 cm]
         \text{with } b_j < A_jA_I^{-1} b_I +  (n\Delta^{\max})^2
         \end{array}
         \right\}\\[.15 cm]
        =&\left\{b\in \Z^m: 
        \begin{array}{l}
        \exists \text{ a feasible basis } I \text{ and } j \in \{1,\ldots,m\}\setminus I\\[.05 cm]
        \text{with } \Delta_Ib_j < \Delta_IA_jA_I^{-1} b_I  + \Delta_I(n\Delta^{\max})^2
        \end{array}
        \right\}.
    \end{align*}
     Cramer's Rule implies that $\Delta_I A_jA_I^{-1} b_I  \in \Z$ for all $j \in \{1,\ldots,m\} \setminus I$. 
     	For each feasible $I$, we have $A_I^{-1} b_I \in \Po(A,b)$ and $A_jA_I^{-1} b_I \le b_j$ for all $j \in \{1, \ldots, m\}$.
     Thus,
\begin{align}\label{asymptoticContainment}
      & 
        \left\{b\in \Z^m: 
        \begin{array}{l}
        \exists \text{ a feasible basis } I \text{ and } j \in \{1,\ldots,m\}\setminus I\\[.05 cm]
        \text{with } \Delta_Ib_j < \Delta_IA_jA_I^{-1} b_I  + \Delta_I(n\Delta^{\max})^2
        \end{array}
        \right\}.\nonumber\\[.15 cm]
        \subseteq&  
        \bigcup_{\substack{I \subseteq \{1,\ldots,m\} \\ I \text{ basis}}}  \bigcup_{j \not \in I} \bigcup_{r = 0}^{\Delta_I(n\Delta^{\max})^2-1}\{b \in \Z^m : \Delta_I b_j = \Delta_IA_jA_I^{-1} b_I + r\}.
        \end{align}

Consider a feasible basis $I$ and a value $r \in \{0, \ldots, \Delta_I(n\Delta^{\max})^2-1\}$.
For each $b \in \{-t, \ldots, t\}^m$ and $j \not \in I$, the equation $\Delta_I b_j = \Delta_IA_jA_I^{-1} b_I + r$ determines $b_j$. 
Hence, 
\[
\big|\{b \in \{-t, \ldots, t\}^m : \Delta_I b_j = \Delta_IA_jA_I^{-1} b_I + r\}\big| \le \prod_{\substack{i=1 \\ i\neq j}}^m |\{-t, \ldots, t\}| = (2t+1)^{m-1}.
\] 
 Using this along with inclusion~\eqref{asymptoticContainment}, we see that
 \begin{equation}\label{eqconvergenceRate}
 \big|\{-t, \ldots, t\}^m \cap (\Z^m \setminus \cG)\big| \le {m\choose n} \cdot (m-n)\cdot n^2(\Delta^{\max})^3 \cdot (2t+1)^{m-1}.
 \end{equation}
 Hence, $ \big|\{-t, \ldots, t\}^m \cap (\Z^m \setminus \cG)\big| \in O(t^{m-1})$.
 \qed
        \end{proof}

By Lemma~\ref{Lemma:BadPointsOnHyperplanes} and the definition of $\Pr(\Z^m \setminus \cG)$, we see that $\Pr(\Z^m \setminus \cG) =0 $. 
Furthermore, Inequality~\eqref{eqconvergenceRate} implies that the $\liminf$ defining $\Pr(\Z^m \setminus \cG) $ is a limit that approaches zero at a rate of $O(1/t)$.
Hence, $\Pr(\cG) = 1 - \Pr(\Z^m \setminus \cG) = 1$. 
Consequently, almost all integer programs can be solved in polynomial time using mixed integer relaxations, provided that $\Delta$ is constant.

\begin{theorem}\label{thmAsymptotic}
The set $\cG$ satisfies $\Pr(\cG) = 1$.
If $b \in \cG$, then either $\Po(A,b) = \emptyset$ or we can identify a point $\Po(A,b) \cap \Z^n$ by finding a feasible basis $I$ for $\Po(A,b)$, computing a mixed integer hull $\WMIP(A_I, b_I)$ equal to $\IP(A_I, b_I)$, and finding a vertex of $\WMIP(A_I, b_I)$.
%
\end{theorem}

{In the algorithm outlined in Theorem~\ref{thmAsymptotic}, we test feasibility of $\IP(A_I, b_I)$ using mixed integer relaxations.
This feasibility can also be tested efficiently, at least when $\Delta$ is fixed, using previously established dynamic programs. 
One dynamic program is given by Gomory~\cite[Page 264]{G1965}. 
In his dynamic program, Gomory uses the group structure on $\Z^n$ induced by the columns of $A_I$, and the order of this group is bounded by $\Delta$.
A second dynamic program, which is found in~\cite{AEGOVW2016}, applies to non-degenerate constraints matrices, e.g., the matrix $A_I$.
A third dynamic program, given in~\cite{EW2018}, efficiently tests feasibility of $\{x \in \Z^n : A'x \le b', x \ge 0\}$ for matrices $A' \in \Z^{m'\times n}$ with the value of $m'$ considered as fixed.
Note that if $\IP(A_I, b_I)$ is transformed into Hermite Normal Form, then the corresponding value of $m'$ is bounded by $\log_2(\Delta)$.}
Although there are other methods for testing feasibility of $\IP(A_I, b_I)$, our approach is the first that tests this using mixed integer relaxations.
%

\section*{Acknowledgements}
The authors wish to thank Helene Wei{\ss} and Stefan Weltge for their help that led to major improvements of the manuscript.
We are also grateful to the anonymous referees for their comments that improved the presentation of the material.
The third author acknowledges the support from the Einstein Foundation Berlin.
 
%
\bibliographystyle{splncs04}
\bibliography{references}

\begin{thebibliography}{10}
\providecommand{\url}[1]{\texttt{#1}}
\providecommand{\urlprefix}{URL }
\providecommand{\doi}[1]{https://doi.org/#1}

\bibitem{ADOO2017}
Aliev, I., {J. De Loera}, Oertel, T., O'Neil, C.: Sparse solutions of linear
  diophantine equations. SIAM Journal on Applied Algebra and Geometry
  \textbf{1},  239--253 (2017)

\bibitem{AEGOVW2016}
Artmann, S., Eisenbrand, F., Glanzer, C., Oertel, T., Vempala, S., Weismantel,
  R.: A note on non-degenerate integer programs with small sub-determinants.
  Operations Research Letters  \textbf{44}(5),  635--639 (2016)

\bibitem{AWZ2017}
Artmann, S., Weismantel, R., Zenklusen, R.: A strongly polynomial algorithm for
  bimodular integer linear programming. In: Proceedings of the 49th Annual ACM
  SIGACT Symposium on Theory of Computing. pp. 1206--1219 (2017)

\bibitem{BHWZ2017}
Bader, J., Hildebrand, R., Weismantel, R., Zenklusen, R.: Mixed integer
  reformulations of integer programs and the affine {TU}-dimension of a matrix.
  Mathematical Programming pp. 1--20 (2017)

\bibitem{barv2002}
Barvinok, A.: A Course in Convexity, vol.~54. Graduate Studies in Mathematics,
  American Mathematical Society, Providence, Rhode Island (2002)

\bibitem{BG2004}
Bruns, W., Gubeladze, J.: Normality and covering properties of affine
  semigroups. Journal f\"ur die reine und angewandte Mathematik  \textbf{510},
  151 -- 178 (2004)

\bibitem{CWZ2018}
Cevallos, A., Weltge, S., Zenklusen, R.: Lifting linear extension complexity
  bounds to the mixed-integer setting. Proceedings of the 2018 ACM-SIAM
  Symposium on Discrete Algorithms  (2018)

\bibitem{CCZ2014}
Conforti, M., Cornu{\'e}jols, G., Zambelli, G.: Integer Programming. Springer
  (2014)

\bibitem{CFHJW2020}
Conforti, M., Fiorini, S., Huynh, T., Joret, G., Weltge, S.: The stable set
  problem in graphs with bounded genus and bounded odd cycle packing number.
  In: Proceedings of the 2020 ACM-SIAM Symposium on Discrete Algorithms (2020)

\bibitem{CFHW2020}
Conforti, M., Fiorini, S., Huynh, T., Weltge, S.: Extended formulations for
  stable set polytopes of graphs without two disjoint odd cycles. In:
  Proceedings of the 2020 International Integer Programming and Combinatorial
  Optimization (2020)

\bibitem{DP2011}
Dadush, D., Peikert, C., Vempala, S.: Enumerative lattice algorithms in any
  norm via {M}-ellipsoid coverings. In: 2011 IEEE 52nd Annual Symposium on
  Foundations of Computer Science. pp. 580--589 (2011)

\bibitem{DEFM2015}
{Di Summa}, M., Eisenbrand, F., Faenza, Y., Moldenhauer, C.: On largest volume
  simplices and sub-determinants. Proceedings of the 2015 Annual ACM-SIAM
  Symposium on Discrete Algorithms of {SODA}  (2015)

\bibitem{EW2018}
Eisenbrand, F., Weismantel, R.: Proximity results and faster algorithms for
  integer programming using the {S}teinitz lemma. In: Proceedings of the 2018
  ACM-SIAM Symposium on Discrete Algorithms. pp. 808--816 (2018)

\bibitem{G1962}
{Ghouila-Houri}, A.: Caract\'{e}risation des matrices totalement unimodulaires.
  C.R. Academy of Science Paris  (1962)

\bibitem{GWZ2018}
Glanzer, C., Weismantel, R., Zenklusen, R.: On the number of distinct rows of a
  matrix with bounded subdeterminants. SIAM Journal of Discrete Mathematics
  (2018)

\bibitem{G1965}
Gomory, R.E.: On the relation between integer and noninteger solutions to
  linear programs. Proceedings of the National Academy of Sciences
  \textbf{53}(2),  260--265 (1965)

\bibitem{GC2016}
Gribanov, D., Chirkov, A.: The width and integer optimization on simplices with
  bounded minors of the constraint matrices. Optimization Letters  (2016)

\bibitem{GMP2020}
Gribanov, D., Malyshev, D., Pardalos, P.: A note on the parametric integer
  programming in the average case: sparsity, proximity, and {FPT}-algorithms.
  arXiv:2002.01307  (2020)

\bibitem{H1957}
Heller, I.: On linear systems with integral valued solutions. Pacific Journal
  of Mathematics  (1957)

\bibitem{HWZ2017}
Hildebrand, R., Weismantel, R., Zenklusen, R.: Extension complexity lower
  bounds for mixed-integer extended formulations. In: Proceedings of the 2017
  ACM-SIAM Symposium on Discrete Algorithms. pp. 2342--2350 (2017)

\bibitem{H2017}
Hupp, L.M.: Integer and Mixed-Integer Reformulations of Stochastic,
  Resource-Constrained, and Quadratic Matching Problems. Ph.D. thesis,
  Friedrich-Alexander-Universit\"{a}t Erlangen-N\"{u}rnberg (2017)

\bibitem{RK1987}
Kannan, R.: Minkowski's convex body theorem and integer programming.
  Mathematics of Operations Research  \textbf{12}(3),  415--440 (1987)

\bibitem{L2000}
Lang, S.: Algebra. Springer (2000)

\bibitem{Lenstra1983}
Lenstra, H.: Integer programming with a fixed number of variables. Mathematics
  of Operations Research  \textbf{8} (1983)

\bibitem{NSZ2019}
N{\H a}gele, M., Sudakov, B., Zenklusen, R.: Submodular minimization under
  congruency constraints. Combinatorica  (2019)

\bibitem{N2000}
Nathanson, M.B.: Elementary Methods in Number Theory, Graduate Texts in
  Mathematics, vol.~195. Springer-Verlag New York (2000)

\bibitem{OPW2020}
Oertel, T., Paat, J., Weismantel, R.: The distributions of functions related to
  parametric integer optimization. SIAM Journal on Applied Algebra and Geometry
   \textbf{3}(4),  839--850 (2020)

\bibitem{OPW2019}
Oertel, T., Paat, J., Weismantel, R.: Sparsity of integer solutions in the
  average case. In: Proceedings of IPCO 2019. pp. 341--353 (2019)

\bibitem{PSW2020}
Paat, J., Schl\"{o}ter, M., Weismantel, R.: The integrality number of an
  integer program. In: Proceedings of the 2020 International Integer
  Programming and Combinatorial Optimization Conference (2020)

\bibitem{PWW2018}
Paat, J., Weismantel, R., Weltge, S.: Distances between optimal solutions of
  mixed-integer programs. Mathematical Programming  (2018)

\bibitem{P2004}
Packer, A.: {NP-Hardness} of largest contained and smallest containing
  simplices for {V}- and {H}- polytopes. Discrete Applied Mathematics  (2004)

\bibitem{AS1986}
Schrijver, A.: Theory of linear and integer programming. John Wiley \& Sons,
  Inc. New York, NY (1986)

\bibitem{VC2009}
Veselov, S., Chirkov, A.: Integer programming with bimodular matrix. Discrete
  Optimization  \textbf{6},  220--222 (2009)

\bibitem{W1981}
Wolsey, L.: The b-hull of an integer program. Discrete Applied Mathematics
  \textbf{3}(3),  193 -- 201 (1981)

\end{thebibliography}


\end{document}